\pdfoutput=1
\RequirePackage{ifpdf}
\documentclass[pdftex]{sigma}
\usepackage[T1]{fontenc}
\usepackage[utf8]{inputenc}
\usepackage{lmodern}
\usepackage{mathtools,booktabs,array,tabularx}
\usepackage{tikz,pgfplots}
\pgfplotsset{compat=1.18}
\usepackage[section]{placeins}
\usepackage[numbers,sort&compress]{natbib}
\usepackage{enumitem,microtype,xurl}
\definecolor{routeblue}{RGB}{30,78,121}
\definecolor{routeorange}{RGB}{180,83,9}
\pgfplotsset{every axis/.append style={font=\small,tick label style={font=\footnotesize},legend style={font=\footnotesize},grid=major,grid style={gray!15},axis line style={gray!70}}}
\usetikzlibrary{arrows.meta,positioning}
\numberwithin{equation}{section}
\newtheorem{Theorem}{Theorem}[section]
\newtheorem{Proposition}[Theorem]{Proposition}
\newtheorem{Lemma}[Theorem]{Lemma}
\newtheorem{Corollary}[Theorem]{Corollary}
\theoremstyle{definition}\newtheorem{Definition}[Theorem]{Definition}
\theoremstyle{remark}\newtheorem{Remark}[Theorem]{Remark}
\newcommand{\C}{\mathbb C}
\newcommand{\norm}[1]{\left\lVert#1\right\rVert}
\newcommand{\ket}[1]{|#1\rangle}
\newcommand{\opt}{\mathrm{opt}}
\newcommand{\geo}{\mathrm{geo}}
\newcommand{\sgn}{\operatorname{sgn}}

\hypersetup{pdftitle={Weighted flip graphs from cyclic quantum mutations: pentagon inequalities and exponential separations},pdfauthor={Xiaomin Tang}}
\begin{document}
\thispagestyle{empty}
\ShortArticleName{Weighted Flip Graphs from Cyclic Quantum Mutations}
\ArticleName{Weighted Flip Graphs\\ from Cyclic Quantum Mutations:\\ Pentagon Inequalities and Exponential Separations}
\Author{Xiaomin TANG}
\AuthorNameForHeading{X. Tang}
\Address{School of Mathematics and Statistics, Heilongjiang University,\\ Harbin 150080, P.R. China}
\Email{\mail{tangxm@hlju.edu.cn}}
\Abstract{Positive cyclic quantum mutations at a third root of unity induce an explicit weight on polygon flip graphs. We study the resulting discrepancy between minimum flip count and minimum accumulated normalization loss. Every pentagon three-to-two replacement strictly decreases weight, and in connected types $A_1$, $A_2$, and $A_3$ every minimum-weight path is a flip geodesic. In type $A_4$, however, a six-flip path has smaller weight than the unique five-flip geodesic. A compatible positive projective gluing construction amplifies this example to an exponential success-probability separation in every connected type $A_n$, $n\ge4$. We also give a quantitative perturbation bound preserving the separation. The low-rank minimality statement has an exhaustive finite certificate; the pentagon inequality and the all-rank construction are analytic. In the associated sequential postselection model, differences of path weights determine success-probability ratios independently of the input state. The comparison holds for separately normalized exact mutation steps.}
\Keywords{cyclic quantum mutation; weighted flip graph; pentagon inequality; associahedron; postselection; path optimization}
\Classification{13F60; 05C12; 81P45}
\section{Introduction}
The flip graph of a polygon carries two natural optimization problems once its flips are represented by nonunitary operators: minimizing the number of flips and minimizing the normalization loss of their sequential implementation. Polygon flip graphs realize the type-$A$ exchange combinatorics of cluster algebras \cite{FZ,FST}. The first optimization problem depends only on the exchange graph. The second also depends on positive coefficient data. This paper determines the smallest connected type $A$ in which the two objectives disagree for a specific weight induced by cyclic quantum mutations, and constructs an exponential separation in arbitrary rank.

For ordered positive Pl\"ucker coordinates and a quadrilateral $i<j<k<l$, the weight is
\begin{equation}\label{eq:introweight}
w(i,j,k,l)=\log\frac{A^{1/3}+B^{1/3}}{(A+B)^{1/3}},\qquad
A=\lambda_{ij}\lambda_{kl},\quad B=\lambda_{il}\lambda_{jk}.
\end{equation}
It is unchanged by reversing the flip. Our main geometric statement is that every pentagon shortening lowers this weight, but these local improvements do not force global weight minimizers to be flip geodesics. The first obstruction occurs in a seven-gon. Positive projective gluing then places arbitrarily many copies of the obstruction inside one connected exchange type, with globally compatible coefficients.

The weight has an exact operational interpretation. In the positive three-level cyclic representation, it is the logarithm of the squared norm of a flip operator. Normalizing each accepted branch by that norm converts path-weight differences into input-independent ratios of all-success probabilities. The geometric results therefore apply to a specified family of nonunitary quantum operations, while the optimization problem itself is formulated entirely on the weighted flip graph.

\subsection{Related work and the present problem}
The operator identities underlying this question have a substantial history. Faddeev and Kashaev established quantum dilogarithm pentagon identities, including a finite-dimensional realization with parameters on a Fermat curve \cite{FK}. Kashaev used quantum dilogarithms to quantize Teichm\"uller spaces \cite{Kashaev98}, while Fock and Chekhov constructed a mapping-class-equivariant noncommutative deformation of their function algebra \cite{CF}. Fock and Goncharov developed cluster ensembles and their quantization \cite{FGensemble}, and constructed representations of quantum cluster varieties using quantum dilogarithms \cite{FGrepresentation}. These works explain why algebraic changes of coordinates can be represented by consistent operator products.

The connection between mutation periodicity and dilogarithm identities was developed by Kashaev and Nakanishi \cite{KN}. Ip and Yamazaki studied root-of-unity limits and cyclic dilogarithm identities associated with quiver mutation sequences \cite{IY}. Finite-dimensional constructions from quantum-torus representation theory were given by Kim \cite{Kim}. More recently, Cheung, Fr\'ias-Medina and Magee extended quantization to deformed cluster Poisson varieties \cite{CFMM}. The direct representation-theoretic input for this paper is Ishibashi's cyclic quantum Teichm\"uller theory \cite{Ishibashi}, which organizes the cyclic parameters by coefficient mutation and supplies the projective representation of the dotted Ptolemy groupoid that we use. Our norm and path calculations concern this specified positive three-level realization; they do not supply an alternative construction of that representation.

Probabilistic implementation of a nonunitary linear map is also established. Terashima and Ueda formulated nonunitary quantum circuits based on measurement and studied improvements through reversing measurements \cite{Terashima}. Azses, Ruhman and Sela investigated nonunitary gates within measurement-based computation and optimized their success probability \cite{Azses}. Leadbeater, Fitzpatrick, Mu\~noz Ramo and Thom constructed probabilistic nonunitary Trotter circuits for imaginary-time evolution using ancillary measurements \cite{Leadbeater}. These studies motivate the importance of normalization losses and the choice of an implementation. Here we fix each mutation's largest exact success branch and compare different mutation factorizations of the same endpoint map. Recovery after failure and unrestricted synthesis of that map are different optimization problems and are not included in our comparison.

On the geometric side, polygon flip distance is the rotation-distance problem studied by Sleator, Tarjan and Thurston \cite{STT}; Pournin determined the diameter of the associahedron \cite{Pournin}. Common-face geodesic properties were studied by Ceballos and Pilaud \cite{Ceballos}, and Williams gave a uniform non-leaving-face theorem for Coxeter associahedra \cite{Williams}. A separate obstruction to a natural local strategy was identified by Cleary and Maio: successively reducing conflicts between diagonals need not produce an unweighted geodesic \cite{ClearyMaio}. Their objective is flip count and their local criterion is conflict reduction. Our objective is a sum of operator-norm costs, and our comparison is between a weighted optimum and all paths of minimum flip count. The two failures of local intuition should therefore be distinguished.

The new results concern the geometry induced by~\eqref{eq:introweight}. The cyclic representation and the one-step norm-normalization principle are established inputs. Neither input determines which mutation paths minimize total loss. Likewise, the unweighted non-leaving-face theorem constrains shortest paths, but does not compare their weights with longer competitors. We supply that comparison through a strict pentagon inequality, a sharp low-rank boundary, and a compatible all-rank construction. In particular, the exponential statement is obtained within a connected type, rather than by assuming independent coefficient assignments on disconnected components.

\subsection{Implementation model and main results}
Throughout, the coefficients are positive and compatible, and the cyclic dimension is three. We compare paths with the same fixed dotted and labelled endpoints in the representation of \cite{Ishibashi}. Changes of dots and labels are deterministic unitary operations. At each flip, the accepted branch is the largest scalar multiple of the exact mutation operator compatible with a quantum operation. Success means that every flip succeeds. The model does not include recovery after failure, gate duration, noise, or the cost of the auxiliary unitary operations.

A \emph{flip geodesic} is a path with the least number of flips between its endpoints. A \emph{success-optimal path} maximizes the all-success probability in the specified model. For a unit input $\psi$, denote the corresponding maximum over all paths by $P_{\opt}(\psi)$, and the maximum restricted to flip geodesics by $P_{\geo}(\psi)$.

\begin{Theorem}[Local cost]\label{thm:local}
A positive cyclic flip with parameters $a,b>0$ and $a^3+b^3=1$ has implementation cost $\log(a+b)$. Reverse flips have the same cost. Square exchanges preserve cost, whereas every pentagon replacement of three edges by two strictly decreases cost.
\end{Theorem}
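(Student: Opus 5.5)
The plan is to reduce everything to a one-variable function of the exchange ratio, and then to use the period-five recurrence of the pentagon. This removes the cube-root denominators and leaves a Hölder-type inequality.

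\textbf{Step 1: local cost.} For the local cost I would take the cyclic flip operator from the positive three-level representation of \cite{Ishibashi}. Its matrix is diagonalised (or made circulant) in the three-dimensional clock basis, with entries built from $a$, $b$ and powers of the primitive third root of unity. The implementation cost is the logarithm of the squared operator norm. The largest admissible accepted branch is the operator divided by its norm, so the cost of one flip is $\log\norm{T}^2$. I expect the circulant structure to give $\norm{T}^2=a+b$ directly: it is attained on the eigenvector where all phases align, and the constraint $a^3+b^3=1$ fixes the normalisation.

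Writing $a=A^{1/3}/(A+B)^{1/3}$ and $b=B^{1/3}/(A+B)^{1/3}$ recovers \eqref{eq:introweight}. In terms of $x=B/A$ the cost is
\[
f(x)=\log\bigl(1+x^{1/3}\bigr)-\tfrac13\log(1+x).
\]
A one-line check gives $f(1/x)=f(x)$. A reverse flip exchanges the roles of $A$ and $B$, so it has the same cost.

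\textbf{Step 2: square exchanges.} Two flips in disjoint quadrilaterals commute. Each flip leaves the Plücker coordinates of the other quadrilateral untouched. Hence the two orders of a square use the same two values of $x$ and have equal total cost.

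\textbf{Step 3: pentagon reduction.} Label the exchange ratios around the pentagon cycle as $y_1,\dots,y_5$. They satisfy $y_{n-1}y_{n+1}=1+y_n$, with period five. Up to rotation and the symmetry $f(x)=f(1/x)$, the three-flip side uses $y_1,y_2,y_3$ and the two-flip side uses $y_4,y_5$. Put $p=y_1$ and $q=y_2$. Then
\[
y_3=\frac{1+q}{p},\qquad y_4=\frac{1+p+q}{pq},\qquad y_5=\frac{1+p}{q}.
\]
The claim becomes
\[
\prod_{i=1}^3\frac{1+y_i^{1/3}}{(1+y_i)^{1/3}}>\prod_{i=4}^5\frac{1+y_i^{1/3}}{(1+y_i)^{1/3}}.
\]
Using $1+y_n=y_{n-1}y_{n+1}$, every denominator becomes a monomial in the $y$'s. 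Clearing them and substituting $p=u^3$, $q=v^3$ should reduce the claim to a strict inequality between expressions in $u$, $v$ and cube roots of $1+u^3$, $1+v^3$ and $1+u^3+v^3$.

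\textbf{Main obstacle.} This last inequality is the step I expect to be hardest. I would first try Hölder's inequality with exponent $3$, in the form
\[
(1+s)(1+t)(1+r)\ge\bigl(1+(str)^{1/3}\bigr)^3,
\]
applied to suitable triples, together with AM--GM. Strictness would come from the equality case of Hölder being impossible for positive $p$ and $q$. If that fails, I would instead minimise the difference $F(p,q)$ of the two logarithmic sums over $(0,\infty)^2$. The plan there is to check the boundary behaviour, where the difference should tend to a positive limit or to infinity, and then show that there are no interior critical points with $F\le0$. A symbolic or numerical scan would guide which Hölder grouping works.
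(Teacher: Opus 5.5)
\textbf{Steps 1--2 and the Step~3 reduction.} Your Steps~1 and~2 agree with the paper: Proposition~\ref{prop:norm} together with Lemma~\ref{lem:kraus}, and the observation that independent flips keep each other's quadrilateral data. Step~1, however, should be a computation rather than an expectation. The operator $T=D(K)S$ acts on $\C^3\otimes\C^3$ and $S$ is unitary, so its singular values are the $|d_j|$. Since $|a+b\omega^{\pm1}|^2=a^2-ab+b^2=1/(a+b)$, one gets $\norm{T}^2=|d_2|^2=(a+b)^2/(a+b)=a+b$. Note that the phase-aligned factor $a+b$ equals $|d_2/d_1|$, not $\norm{T}$.

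Your Step~3 reduction is correct and differs from the paper's route. Suitably oriented, the five ratios around a pentagonal face do satisfy $y_{n-1}y_{n+1}=1+y_n$, and clearing denominators turns the claim into
\[
(1+u)(1+v)(u+\nu)>(uv+\rho)(v+\mu),\qquad \mu=(1+u^3)^{1/3},\quad \nu=(1+v^3)^{1/3},\quad \rho=(1+u^3+v^3)^{1/3}.
\]
\textbf{The gap.} This inequality is the whole content of the pentagon statement, and you never prove it. Both routes you sketch run into concrete obstructions.

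\textbf{The fallback rests on a false premise.} You expect $F$ to have a positive or infinite limit at the boundary, but $F(p,q)\to0$ as $p\to0$, as $p\to\infty$, and as $q\to\infty$. For example, as $p\to0$ one has $f(y_1),f(y_3),f(y_4)\to0$ while $f(y_5)\to f(1/q)=f(y_2)$; in the display above, both sides tend to $(1+v)\nu$ as $u\to0$. Only $q\to0$ gives a positive limit, namely $2f(p)$. So the infimum of $F$ is $0$, approached at the boundary. There is no margin for a numerical scan to certify, and ruling out interior critical points with $F\le0$ is as hard as the original problem; you give no method for it.

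\textbf{H\"older points the wrong way.} The cube roots $\mu$ and $\rho$ occur only on the smaller side, so they need upper bounds. The H\"older inequality you quote bounds such cube roots from below. Moreover, every estimate in the chain must be asymptotically exact along the three degenerate directions.

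\textbf{How the paper closes this step.} In Proposition~\ref{prop:pentagon} the paper parametrizes two of the flips by pairs $(a,b)$ and $(c,d)$ with $a^3+b^3=c^3+d^3=1$. This rationalizes all but one cube root, $e=(a^3+b^3d^3)^{1/3}=(d^3+a^3c^3)^{1/3}$. The strict bound $e>\max(a,d)$ is exact in the degenerate limits. After splitting into the cases $a\ge d$ and $d\ge a$, the remaining polynomial inequalities are certified by explicit expansions in $a^2+b^2-1\ge0$, $c^2+d^2-1\ge0$ and manifestly nonnegative terms.

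The same device fits your variables. Setting $u=b/a$ and $v=d/c$ gives $\mu=1/a$, $\nu=1/c$, and $\rho=\sigma/(ac)$ with $\sigma^3=1-b^3d^3$. The claim becomes $(a+b)(c+d)(a+bc)>(bd+\sigma)(c+ad)$. You would still need upper bounds for $\sigma$ that are sharp in each degeneration: for instance, $\sigma<1$ is exact as $bd\to0$ but insufficient as $a\to0$, where $\sigma\to c$. This forces a case split and explicit certificates, none of which your proposal supplies.
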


\begin{Theorem}[Minimal connected type; computer assisted]\label{thm:minimal}
In connected types $A_1$, $A_2$, and $A_3$, every success-optimal path is a flip geodesic. There exist positive compatible coefficients in type $A_4$ for which no flip geodesic is success optimal. Thus $A_4$ is the smallest connected $A$-type rank admitting this separation in the present model.
\end{Theorem}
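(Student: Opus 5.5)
We reduce both assertions of Theorem~\ref{thm:minimal} to statements about the weighted flip graph. In the implementation model, the all-success probability of a path is $\norm{M\psi}^2$ divided by $\exp$ of the total weight. Here $M$ is the common endpoint map, which is the same for every path up to the deterministic unitaries, by the projective consistency of the representation of \cite{Ishibashi}. Hence comparing success probabilities is equivalent to comparing path weights $\sum w$, independently of $\psi$, and it suffices to work with the weight~\eqref{eq:introweight} on the flip graph of the $(n+3)$-gon.

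\emph{Types $A_1$, $A_2$, $A_3$.} We use Theorem~\ref{thm:local}: weights are reversal invariant, square moves preserve weight, and pentagon $3\to2$ replacements strictly decrease weight.
\begin{enumerate}
\item For $A_1$ (square) and $A_2$ (pentagon), there is nothing beyond this. Any non-geodesic path between two triangulations can be reduced, using backtrack cancellations (which decrease weight by $2w>0$) and pentagon moves, to a geodesic of strictly smaller weight.
\item For $A_3$ (hexagon, 14 vertices, with 2-faces being squares and pentagons), I would attempt the same rewriting argument. A general path is homotopic to a geodesic through 2-cells, but the rewriting might need $2\to3$ steps, so monotonicity is not automatic.
\item Hence, as the abstract indicates, the $A_3$ claim will rest on an exhaustive finite certificate. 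For each ordered pair of triangulations, we enumerate the simple paths, or more efficiently compare each non-geodesic simple path against geodesics. We must show that some geodesic has strictly smaller weight for every positive choice of the Plücker coordinates.
\item Because the coefficients are continuous parameters, the certificate must be symbolic. One way is to express each weight difference as a sum of pentagon deficits from Theorem~\ref{thm:local} plus cancelling backtrack pairs, i.e.\ to exhibit, for every longer path, a chain of moves each weakly decreasing weight with at least one strict.
\item Finding such a chain for each of the finitely many path classes modulo the dihedral symmetry of the hexagon is a finite search over the 2-cells of the 3-dimensional associahedron. This search is what I expect to be the main obstacle: proving that a monotone reduction always exists in rank 3, independent of coefficients.
\end{enumerate}

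\emph{Type $A_4$.} Here we give an explicit instance in the heptagon. We pick two triangulations at flip distance $5$ joined by a unique geodesic; the Pournin-type diameter bound shows that such pairs exist. We then exhibit a six-flip path leaving a common face, which is permitted since the non-leaving-face property only constrains geodesics. We choose positive Plücker coordinates, for instance rational points on the totally positive Grassmannian $\mathrm{Gr}(2,7)$ obtained from positive $2\times7$ matrices, and verify numerically, with rigorous interval arithmetic, that the six-flip weight is smaller than the geodesic weight. Uniqueness of the geodesic is certified by breadth-first search on the 42-vertex flip graph. Since success-optimality is then violated by every geodesic (there is only one), the separation holds.

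Minimality follows by combining the two parts. To locate good coefficients, I would make the geodesic contain flips on nearly balanced quadrilaterals ($A\approx B$, where $w$ is maximal, $\log(2^{2/3})$), while the detour uses highly unbalanced ones with $w\approx0$.
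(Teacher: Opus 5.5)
Your route is the paper's. You reduce to path weights via Proposition~\ref{prop:path}. You prove the low-rank claim by a coefficient-independent rewriting with weight-preserving square exchanges, strictly cost-decreasing pentagon $3\to2$ replacements (Proposition~\ref{prop:pentagon}) and loop deletion. You settle $A_4$ by a heptagon pair whose unique five-flip geodesic is beaten by a six-flip path. Your treatment of $m=4,5$ is correct.

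However, the proposal stops before either nontrivial half is proved. For $A_3$ you leave open exactly the point the claim depends on: whether a reduction ever needs a $2\to3$ step. As written, items 3--5 are a search plan, not an argument. The missing fact is the paper's Lemma~\ref{lem:finite}. All $8766$ nongeodesic simple paths in $\mathcal F_6$ are processed by increasing length. Each class under simplicity-preserving square exchanges is shown to have an exit, either a pentagon shortening or a square exchange creating a deletable loop, to an already certified shorter path. The lexicographic rank (length, tree distance to the exit) gives termination. No $2\to3$ move is ever needed, and the check is independent of the coefficients.

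For $A_4$ the claim is existential, and you exhibit no endpoints, no competing path and no coordinates. Your heuristic (balanced geodesic flips, degenerate detour flips) is not shown to be realizable. The eleven coefficients along the two paths are all cross-ratios of the same seven points, so they are strongly coupled. The heuristic is also not what happens in the paper's witness: $\mathcal S=\{25,26,27,35\}$, $\mathcal U=\{13,14,46,47\}$, $t=(0,1,2,3,L,L+1,2L)$, with detour $25\to36$, $35\to46$, $26\to37$, $36\to47$, $27\to13$, $37\to14$. There, as $L\to\infty$ the six-flip product tends to $g(1,1)g(1,2)$, not to $1$, and the gain is the single extra geodesic factor $g(1,3)$. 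The paper proves uniqueness of the geodesic by hand (Lemma~\ref{lem:unique}): a target-diagonal count forces at least five flips and then forces every step. It certifies $1.3159<P_6/P_5<1.3160$ at $L=100$ by integer cube-root enclosures.

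Two statements in your sketch are wrong as given:
\begin{itemize}
\item Pournin's theorem concerns polygons with more than twelve vertices and says nothing about uniqueness of geodesics, so your pair would have to come from the breadth-first search anyway.
\item Two heptagon triangulations at distance $5$ share no diagonal. If they did, they would be joined inside the face $\mathcal F_6$ or $\mathcal F_4\times\mathcal F_5$, of diameter at most $4$. So ``a six-flip path leaving a common face'' is vacuous.
\end{itemize}
Finally, projective consistency gives only $M_\gamma=cM_\eta$ with $c\ne0$. You need $|c|=1$, which the paper obtains from $|\det T|=1$.
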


\begin{Theorem}[Exponential separation]\label{thm:exponential}
For every $n\ge4$, put $k=\lfloor(n+1)/5\rfloor$. There exist positive compatible coefficients and two seeds of connected type $A_n$ such that, for every unit input $\psi$,
\begin{equation}\label{eq:mainbound}
\frac{P_{\opt}(\psi)}{P_{\geo}(\psi)}>\exp(2k/15).
\end{equation}
Let
\begin{equation}\label{eq:C}
C=\frac{1+\sqrt[3]{3}}{\sqrt[3]{4}}.
\end{equation}
For each fixed $1<c<C$, one may choose a single local parameter, independent of $n$, so that the constructed ratio is greater than $c^k$ at every rank $n\ge4$.
\end{Theorem}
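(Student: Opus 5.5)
The argument bootstraps the rank-four obstruction of Theorem~\ref{thm:minimal}. By the sequential postselection model, every path $\gamma$ has all-success probability $P_\gamma(\psi)=\|\psi\|^2\exp(-2W(\gamma))$ up to a common endpoint-dependent factor, where $W(\gamma)=\sum w$ is the total weight (a sum of costs $\log(a+b)$ from Theorem~\ref{thm:local}). The factor $2$ reflects that $w$ is the logarithm of a squared norm and per-step normalization squares amplitudes; I will fix this convention from the operational section. Hence
\[
\frac{P_{\opt}(\psi)}{P_{\geo}(\psi)}=\exp\bigl(W_{\geo}-W_{\opt}\bigr)
\]
independently of $\psi$, where $W_{\geo}$ is the least weight among flip geodesics and $W_{\opt}$ the least weight overall. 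The task therefore reduces to producing endpoint seeds with $W_{\geo}-W_{\opt}>2k/15$, or $>k\log c$ in the tunable version.

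\emph{Local gadget.} In the heptagon ($A_4$), take the configuration from Theorem~\ref{thm:minimal}: a unique five-flip geodesic $g$ and a six-flip path $p$ with $w(g)-w(p)=\delta>0$. I would parametrize this configuration by a single positive parameter and compute $\delta$ explicitly in terms of flip parameters $(a,b)$ with $a^3+b^3=1$. The constant $C=(1+3^{1/3})/4^{1/3}$ should then arise as the supremum of $\exp(\delta)$ along a degenerating family. For instance, the ratio $(a+b)$ at $a^3=1/4$, $b^3=3/4$ gives $(1+3^{1/3})/4^{1/3}$, which suggests the gadget's gain tends to the cost of one flip with parameter ratio $3$ while the other terms become negligible. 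Choosing a fixed parameter at which $\delta>2/15$, and for each $c<C$ one with $e^\delta>c$, handles the local part. The parameter is independent of $n$.

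\emph{Gluing.} For $n\ge4$, take $k=\lfloor (n+1)/5\rfloor$ disjoint heptagonal sub-polygons glued along edges of an $(n+3)$-gon. Each heptagon uses $5$ new vertices beyond a shared edge, which accounts for $5k\le n+1$, with any leftover vertices placed in an inert fan. The endpoint triangulations agree outside the heptagons and restrict to the gadget endpoints inside each one. Positive projective gluing requires choosing ordered positive Plücker coordinates on the big polygon whose restriction to each heptagon is projectively equivalent to the gadget data. Since $w$ depends only on cross-ratio-type quantities $A/B$ of quadrilaterals, the weight of flips inside a heptagon depends only on that heptagon's coordinates. I would build this by successive amalgamation of positive configurations (totally positive Grassmannian points glue along a shared edge via a rescaling), which is where compatibility is used.

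\emph{Lower bound on geodesics and the main obstacle.} The optimal side is easy: concatenating the $k$ six-flip paths gives $W_{\opt}\le\sum(\text{geodesic weights})-k\delta$. The hard step is showing that every flip geodesic of the big polygon has weight at least $k$ times the gadget geodesic weight. I would argue this with the non-leaving-face property \cite{Ceballos,Williams}: since the endpoints share all diagonals outside the heptagons, every flip geodesic keeps those common diagonals. It therefore lies in the product of the heptagon flip graphs, and it is a shuffle of geodesics in each factor. Each factor geodesic is unique, so the total geodesic weight equals exactly $k\,w(g)$, with no dependence on the interleaving because flips in distinct factors commute (square exchanges preserve cost by Theorem~\ref{thm:local}). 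The gap is then $\ge k\delta$, and strictness of $\delta>2/15$ gives the strict inequality in~\eqref{eq:mainbound}. The delicate points are verifying that the common-diagonal sets separate the factors exactly, and that the gadget's geodesic uniqueness persists after projective gluing. Both should follow once the weights are shown to be local.
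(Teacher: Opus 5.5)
Your global construction is the paper's. You place $k$ heptagon regions in one $(n+3)$-gon with compatible global Pl\"ucker data, and use the non-leaving-face property to confine every flip geodesic to the product face. You then use uniqueness of the regional five-flip geodesic, and take the interleaved six-flip paths as a feasible competitor, giving a ratio of at least $R^k$. Your abstract amalgamation of cyclically ordered configurations along shared edges is a valid substitute for the paper's explicit coordinates~\eqref{eq:globalcoords}, which use $\phi_L$ and translations fixing the common vertex $v_0=\infty$. In both versions only four-point cross-ratios inside one region enter the weights.

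There is one slip: there is no factor $2$. Since $w=\log(a+b)=\log\norm{T}^2$ is already a squared norm, each step contributes $e^{-w}$, so $P_\gamma(\psi)=e^{-W_\gamma}\norm{M_\gamma\psi}^2$. The $\psi$-dependent factor is common to all paths because $M_\gamma=cM_\eta$ with $|c|=1$. Your displayed ratio $\exp(W_{\geo}-W_{\opt})$ is correct, but it contradicts your own formula $\exp(-2W)$.

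The genuine gap is the local input, which is where both constants in the theorem come from. Theorem~\ref{thm:minimal} as stated gives only $\delta>0$. Nothing in your proposal produces a parameter with $\delta>2/15$, let alone a family with $e^\delta$ approaching $C$. Saying ``parametrize and compute'', with $C$ reverse-engineered as $g(1,3)$, is a conjecture, and an arbitrary degeneration of the heptagon data need not have this limit.

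The paper fills this with Proposition~\ref{prop:family}. It takes $t(L)=(0,1,2,3,L,L+1,2L)$, the geodesic~\eqref{eq:five}, and the six-flip path $25\to36$, $35\to46$, $26\to37$, $36\to47$, $27\to13$, $37\to14$. Table~\ref{tab:pairs} then shows the five-flip product tends to $g(1,1)g(1,2)g(1,3)$: two steps degenerate, and step five is exactly $(1,3)$ for every $L$. The six-flip product tends to $g(1,1)g(1,2)$, with four steps degenerating. So your mechanism is slightly off: the other factors do not all become negligible. Two nondegenerate factors survive on each side and cancel.

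Once $\lim_{L\to\infty}R(L)=C$ is established, both claims follow by fixing one $L$ independent of $n$, since $C>3/2>e^{2/15}$. The paper additionally proves $R(L)>e^{2/15}$ for all $L\ge1000$, using $|df/d\log r|\le 1/3$ and $f(r)\le r^{1/3}$. The certified enclosure $R(100)>1.3159$ would also suffice for the first claim. The two points you flag as delicate are immediate: regional geodesic uniqueness is purely combinatorial and unaffected by the coefficients.
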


The representation theory and the optimal normalization of one nonunitary gate are inputs to this work. The results concern their induced weighted flip geometry. The lower-rank part of Theorem~\ref{thm:minimal} uses a complete finite combinatorial certificate; the all-rank amplification in Theorem~\ref{thm:exponential} is analytic. Its geometric input is the unweighted non-leaving-face property \cite{STT,Ceballos,Williams}. We do not assume a weighted version of that property.

\section{Polygon seeds and positive coefficients}\label{sec:coefficients}
\subsection{Exchange data}
\begin{Definition}
Let $\mathcal T$ be a triangulation of a convex $m$-gon. Its $m-3$ internal diagonals are mutable edges and its boundary sides are frozen edges. A flip replaces the diagonal of a quadrilateral by the other diagonal. The graph of triangulations joined by flips is denoted by $\mathcal F_m$; it is the exchange graph of connected type $A_{m-3}$.
\end{Definition}

The extended signed adjacency matrix $\varepsilon$ is obtained by summing contributions from triangles. Choose the orientation so that a triangle with ordered vertices $i<j<k$ contributes $+1$ to consecutive entries of the cyclic edge list $(ij,jk,ik)$ and $-1$ in the reverse direction. Its mutable part is skew-symmetric. Under mutation at $\kappa$,
\begin{equation*}
\varepsilon'_{ef}=\begin{cases}
-\varepsilon_{ef},&e=\kappa\text{ or }f=\kappa,\\
\varepsilon_{ef}+[\varepsilon_{e\kappa}]_+[\varepsilon_{\kappa f}]_+
-[-\varepsilon_{e\kappa}]_+[-\varepsilon_{\kappa f}]_+,&\text{otherwise},
\end{cases}
\end{equation*}
where $[x]_+=\max(x,0)$. For context, the algebraic quantum seed construction starts from a based quantum torus with monomials $M(\alpha)$, $\alpha\in\mathbb Z^N$, satisfying
\begin{equation*}
M(\alpha)M(\beta)=v^{\Lambda(\alpha,\beta)}M(\alpha+\beta),
\end{equation*}
where $v$ is an indeterminate and $\Lambda$ is an integral skew-symmetric form. An extended exchange matrix $\widetilde B$ is compatible when
$\widetilde B^{\mathsf T}\Lambda=(D\ \ 0)$ with $D$ a positive diagonal matrix, after mutable indices are placed first. Mutation in direction $\kappa$ replaces the corresponding generator by
\begin{equation*}
M\left(-e_\kappa+\sum_i[b_{i\kappa}]_+e_i\right)
+M\left(-e_\kappa+\sum_i[-b_{i\kappa}]_+e_i\right),
\end{equation*}
where $e_i$ are the standard lattice vectors; the other generators are unchanged \cite{BZ}. The associated skew form and exchange matrix mutate compatibly. This recalls the algebraic meaning of quantum mutation. The operators studied below are the cyclic geometric realization supplied by \cite{Ishibashi}, with the explicit coefficient convention fixed in this section. We do not construct a new quantum seed or identify the auxiliary dotted-triangle coordinates with all generators of the above torus.

\subsection{Positive Pl\"ucker data}
The distinction between ordinary coefficient mutation and quantum cluster Poisson transformations is relevant here. The coefficient framework comes from \cite{FZIV}, while the Poisson-side quantization and its deformation parameters are treated in \cite{FGensemble,CFMM}. Our explicit positive dictionary below is chosen to match the cyclic representation of \cite{Ishibashi}.
Choose homogeneous vectors $u_i\in\mathbb R^2$ with positive ordered determinants
$\lambda_{ij}=\det(u_i,u_j)>0$ for $i<j$. One example is
$u_i=(1,t_i)$ with $t_1<\cdots<t_m$, giving $\lambda_{ij}=t_j-t_i$.
For $i<j<k<l$, the Pl\"ucker relation reads
\begin{equation}\label{eq:plucker}
\lambda_{ik}\lambda_{jl}=\lambda_{ij}\lambda_{kl}+\lambda_{il}\lambda_{jk}.
\end{equation}
For an edge $e$ of a triangulation, including boundary edges when needed, define
\begin{equation*}
Y_e=\prod_f\lambda_f^{\varepsilon_{ef}}.
\end{equation*}
In a quadrilateral whose current diagonal is $\kappa=ik$, put
\begin{equation*}
A=\lambda_{ij}\lambda_{kl},\qquad B=\lambda_{il}\lambda_{jk}.
\end{equation*}
Then $Y_\kappa=A/B$. The new diagonal has coefficient $Y'_\kappa=B/A$.

Figure~\ref{fig:quadrilateral} fixes the geometric convention and the two positive products entering the weight.
\begin{figure}[tbp]
\centering
\begin{tikzpicture}[scale=1.1,font=\small]
\foreach \shift/\tag in {0/a,5.5/b}{
\begin{scope}[xshift=\shift cm]
\coordinate (\tag i) at (-1,1);\coordinate (\tag j) at (1,1);
\coordinate (\tag k) at (1,-1);\coordinate (\tag l) at (-1,-1);
\draw (\tag i)--node[above]{$\lambda_{ij}$}(\tag j)--node[right]{$\lambda_{jk}$}(\tag k)--node[below]{$\lambda_{kl}$}(\tag l)--node[left]{$\lambda_{il}$}cycle;
\node[above left] at (\tag i){$i$};\node[above right] at (\tag j){$j$};
\node[below right] at (\tag k){$k$};\node[below left] at (\tag l){$l$};
\end{scope}}
\draw[routeblue,very thick] (ai)--node[above right,pos=.57]{$ik$}(ak);
\draw[routeorange,very thick,dashed] (bj)--node[above left,pos=.6]{$jl$}(bl);
\draw[<->,>=Stealth,thick] (1.8,0)--node[above]{flip}(3.7,0);
\node at (0,-1.85){$Y=A/B$};\node at (5.5,-1.85){$Y'=B/A$};
\node at (2.75,-2.6){$A=\lambda_{ij}\lambda_{kl},\quad B=\lambda_{il}\lambda_{jk}$};
\end{tikzpicture}
\caption{A quadrilateral flip. The two orientations have reciprocal coefficients and the same weight $w=\log g(A,B)$. The boundary labels are unchanged.}\label{fig:quadrilateral}
\end{figure}

\begin{Lemma}[Coefficient compatibility]\label{lem:coeff}
The positive coefficients satisfy
\begin{equation}\label{eq:Ymut}
Y'_\kappa=Y_\kappa^{-1},\qquad
Y'_e=Y_e\bigl(1+Y_\kappa^{-\sgn(\varepsilon_{e\kappa})}\bigr)^{-\varepsilon_{e\kappa}}
\quad(e\ne\kappa),
\end{equation}
with $Y'_e=Y_e$ if $\varepsilon_{e\kappa}=0$. The positive parameters
\begin{equation}\label{eq:ab}
a_e=(1+Y_e)^{-1/3},\qquad b_e=Y_e^{1/3}(1+Y_e)^{-1/3}
\end{equation}
satisfy the cyclic coefficient mutation rule, with $(a'_\kappa,b'_\kappa)=(b_\kappa,a_\kappa)$.
\end{Lemma}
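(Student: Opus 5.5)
The plan is to verify the coefficient rule by a direct local computation in the quadrilateral being flipped, and then to derive the $(a,b)$ statements from the coefficient rule.

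\emph{Step 1: the flipped diagonal.} Fix a triangulation $\mathcal T$ and a diagonal $\kappa=ik$ in the quadrilateral $i<j<k<l$. Since $Y_e=\prod_f\lambda_f^{\varepsilon_{ef}}$ and $\varepsilon$ is a sum of triangle contributions, $Y_e$ depends only on the (at most two) triangles containing $e$. For $e=\kappa$, the triangles $ijk$ and $ikl$ give, with the stated orientation convention, $Y_\kappa=\lambda_{ij}\lambda_{kl}/(\lambda_{il}\lambda_{jk})=A/B$. This was already recorded before the lemma. After the flip the triangles are $ijl$ and $jkl$, and the same bookkeeping gives $Y'_{jl}=B/A$. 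This proves the first identity $Y'_\kappa=Y_\kappa^{-1}$.

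\emph{Step 2: the neighbouring edges.} For $e\ne\kappa$, only the four sides $ij,jk,kl,il$ of the quadrilateral have $\varepsilon_{e\kappa}\neq0$. Any other edge lies in no triangle changed by the flip, so its $Y_e$ is unchanged, which is consistent with $\varepsilon_{e\kappa}=0$. I would treat one side in full, say $e=ij$. Before the flip it lies in triangle $ijk$, and afterwards in $ijl$. Its other triangle, on the far side, is unchanged. Its factor therefore changes by the ratio of the contributions of $ijl$ and $ijk$, which is a ratio of the form $\lambda_{jl}^{\pm1}\lambda_{il}^{\mp1}$ over $\lambda_{ik}^{\pm1}\lambda_{jk}^{\mp1}$. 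The Plücker relation \eqref{eq:plucker}, $\lambda_{ik}\lambda_{jl}=A+B$, then converts this ratio into $(1+B/A)^{\pm1}$ or $(1+A/B)^{\pm1}$. I would then check that the sign and exponent agree with $\varepsilon_{e\kappa}$ and $\sgn(\varepsilon_{e\kappa})$ as read off from the triangle $ijk$ under the convention $(ij,jk,ik)$. The other three sides follow in the same way, or by the symmetry of the quadrilateral combined with the reversal of orientation on alternate sides.

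This sign bookkeeping is the main obstacle. It involves tracking which cyclic position each edge occupies in the ordered triangles, and how the sign pattern of triangle contributions flips when the opposite triangle is replaced. I would tabulate the four cases explicitly.

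\emph{Step 3: the parameters $(a,b)$.} By definition $a_e^3+b_e^3=1$ and $b_e/a_e=Y_e^{1/3}$. Replacing $Y_\kappa$ by $Y_\kappa^{-1}$ gives
\[
a'_\kappa=(1+Y_\kappa^{-1})^{-1/3}=Y_\kappa^{1/3}(1+Y_\kappa)^{-1/3}=b_\kappa,
\]
and symmetrically $b'_\kappa=a_\kappa$.

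For $e\ne\kappa$, the cyclic coefficient mutation rule of \cite{Ishibashi} expresses $(a'_e,b'_e)$ through $(a_e,b_e)$ and $(a_\kappa,b_\kappa)$. This amounts to the cube-root version of \eqref{eq:Ymut}: $Y_e^{1/3}$ is multiplied by $(1+Y_\kappa^{\mp1})^{\mp1/3}$, and $1+Y_e$ transforms accordingly. Substituting \eqref{eq:Ymut} into \eqref{eq:ab} and simplifying with positive real cube roots, which is legitimate since every quantity is positive, verifies that rule. Positivity of all the $\lambda$'s guarantees that all $Y$'s, $a$'s and $b$'s are positive throughout.
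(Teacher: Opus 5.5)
Your proposal is correct and follows the paper's proof: cancellation of contributions from triangles outside the quadrilateral, then the Plücker relation giving the ratio $1+Y_\kappa$ on $ij,kl$ (where $\varepsilon_{e\kappa}=-1$) and $Y_\kappa/(1+Y_\kappa)$ on $jk,il$ (where $\varepsilon_{e\kappa}=+1$), and finally positive cube roots. The sign bookkeeping you deferred works out exactly this way, and the cube-root step is the rule $y'_e=y_e b_\kappa^{\varepsilon_{e\kappa}}$ for $\varepsilon_{e\kappa}\ge0$ and $y'_e=y_e a_\kappa^{\varepsilon_{e\kappa}}$ for $\varepsilon_{e\kappa}<0$, with $y_e=Y_e^{1/3}$, because $b_\kappa=(1+Y_\kappa^{-1})^{-1/3}$ and $a_\kappa^{-1}=(1+Y_\kappa)^{1/3}$.
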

\begin{proof}
Contributions from triangles outside the quadrilateral cancel in the coefficient ratios. Equation~\eqref{eq:plucker} gives
\begin{align*}
\frac{Y'_{ij}}{Y_{ij}}=\frac{Y'_{kl}}{Y_{kl}}&=\frac{A+B}{B}=1+Y_\kappa,\\
\frac{Y'_{jk}}{Y_{jk}}=\frac{Y'_{il}}{Y_{il}}&=\frac{A}{A+B}=\frac{Y_\kappa}{1+Y_\kappa}.
\end{align*}
The corresponding signed adjacency entries are $-1$ and $+1$, respectively. This proves~\eqref{eq:Ymut}. Put $y_e=b_e/a_e=Y_e^{1/3}$. Taking the unique positive cube root yields
\begin{equation*}
y'_e=\begin{cases}
y_e b_\kappa^{\varepsilon_{e\kappa}},&\varepsilon_{e\kappa}\ge0,\\
y_e a_\kappa^{\varepsilon_{e\kappa}},&\varepsilon_{e\kappa}<0.
\end{cases}
\end{equation*}
This is the required positive coefficient rule. No root-of-unity choice is introduced by taking positive roots.
\end{proof}

Because the Pl\"ucker data define $Y_e$ in every triangulation, the positive pairs in~\eqref{eq:ab} form a global coefficient collection in the sense of \cite[Definition 4.5]{Ishibashi}; compatibility holds on every flip by Lemma~\ref{lem:coeff}.

Our parameter order is $(a,b)=(p^-,p^+)$ in the cyclic notation. Reversing the orientation requires changing both the signed adjacency convention and the coefficient dictionary. The symmetry of the eventual edge cost under $Y\mapsto Y^{-1}$ does not remove this requirement at the representation level.

\section{Cyclic operators and postselected implementation}\label{sec:operators}
\subsection{The three-level flip}
Let $\omega=\exp(2\pi\mathrm i/3)$ and $q=\omega^2$, so $q^2=\omega$ and $q^3=1$. On $\C^3$, indexed by $\mathbb Z/3\mathbb Z$, define
\begin{equation*}
X\ket{j}=\ket{j+1},\qquad Z\ket{j}=\omega^j\ket{j},\qquad ZX=\omega XZ.
\end{equation*}
On two tensor factors set
\begin{equation*}
K=q^{-1}X_1^{-1}Z_1X_2,\qquad S\ket{i,j}=\ket{i-j,j}.
\end{equation*}
The operator $K$ is unitary, $K^3=I$, and its eigenvalues $1,\omega,\omega^2$ each have multiplicity three. Let $E_j$ be the spectral projection at $\omega^j$. For $a,b>0$ with $a^3+b^3=1$, define
\begin{align}
d_0&=1,&d_1&=a+b\omega^2,&d_2&=(a+b)d_1,\notag\\
D(K)&=\sum_{j=0}^2d_jE_j,&T(a,b)&=D(K)S.\label{eq:T}
\end{align}
The cyclic recursion is $d_{j+1}=(a+bq\omega^j)d_j$, with $d_3=d_0$. Its closure follows from
\begin{equation*}
(a+b\omega^2)(a+b)(a+b\omega)=a^3+b^3=1.
\end{equation*}

\begin{Proposition}[Norm and determinant]\label{prop:norm}
The singular values of $T(a,b)$ are
\begin{equation*}
1,\qquad (a+b)^{-1/2},\qquad(a+b)^{1/2},
\end{equation*}
each with multiplicity three. In particular,
\begin{equation}\label{eq:norm}
\norm{T}^2=\norm{T^{-1}}^2=a+b,\qquad |\det T|=1.
\end{equation}
No nonzero scalar multiple of $T$ is unitary.
\end{Proposition}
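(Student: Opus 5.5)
Since $S$ is a permutation of the computational basis of $\C^3\otimes\C^3$, it is unitary. Hence $T^*T=S^*D(K)^*D(K)S$ is unitarily conjugate to $D(K)^*D(K)$. The operator $K$ is unitary, so its spectral projections $E_0,E_1,E_2$ are orthogonal, mutually orthogonal, and sum to $I$. Therefore $D(K)$ is normal and
\begin{equation*}
D(K)^*D(K)=\sum_{j=0}^2|d_j|^2E_j .
\end{equation*}
So the singular values of $T$ are $|d_0|,|d_1|,|d_2|$. Each occurs with multiplicity $\operatorname{rank}E_j=3$, which is the stated multiplicity of each eigenvalue of $K$; we take that fact as given (it can be rechecked by noting that $K$ is a tensor product of a Weyl-type unitary of order three with trace zero on $\C^9$).

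Everything therefore reduces to computing $|d_1|$. Since $a,b$ are real, $|a+b\omega^2|^2=(a+b\omega^2)(a+b\omega)=a^2-ab+b^2$. The closure identity gives $(a+b)(a^2-ab+b^2)=a^3+b^3=1$, so
\begin{equation*}
|d_1|^2=(a+b)^{-1},\qquad |d_1|=(a+b)^{-1/2},\qquad |d_2|=(a+b)|d_1|=(a+b)^{1/2}.
\end{equation*}
Together with $d_0=1$, this gives the claimed singular values.

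We have $a+b\ge (a^3+b^3)^{1/3}=1$, with strict inequality because $a,b>0$. Hence $(a+b)^{1/2}$ is the largest singular value, and $\norm{T}^2=a+b$. The singular values of $T^{-1}$ are the reciprocals $1,(a+b)^{1/2},(a+b)^{-1/2}$, so $\norm{T^{-1}}^2=a+b$ as well. The absolute determinant is the product of the singular values, $\bigl(1\cdot(a+b)^{-1/2}(a+b)^{1/2}\bigr)^3=1$.

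Finally, $cT$ is unitary only if all its singular values $|c|\sigma_j$ equal $1$. That would force all $\sigma_j$ to be equal. But $(a+b)^{1/2}\ne(a+b)^{-1/2}$ because $a+b>1$, so no nonzero scalar multiple of $T$ is unitary.

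The only nontrivial point is justifying that the $E_j$ are orthogonal, so that the singular values of the normal operator $D(K)$ are the moduli of its eigenvalues; this follows directly from the unitarity of $K$. The rest is the short computation above.
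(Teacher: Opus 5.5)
Your proposal is correct and follows the paper's route: unitarity of $S$ reduces the singular values to $|d_j|$ with multiplicity $\operatorname{rank}E_j=3$, the closure identity gives $|d_1|^2=a^2-ab+b^2=(a+b)^{-1}$ and $|d_2|^2=a+b$, and $a+b>1$ shows the singular values are unequal. You also spell out steps the paper leaves implicit, namely the orthogonality of the $E_j$ coming from unitarity of $K$, and the derivation of $\norm{T^{-1}}$, $|\det T|$ and non-unitarity from the singular values.
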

\begin{proof}
Since $S$ is unitary, the singular values are the moduli of the coefficients $d_j$. Moreover,
\begin{equation*}
|d_1|^2=a^2-ab+b^2=\frac1{a+b},\qquad |d_2|^2=a+b.
\end{equation*}
Their product is one. Positivity and $a^3+b^3=1$ give $a+b>1$, so the singular values are unequal.
\end{proof}

The dependence of these singular values on the positive coefficient is shown in figure~\ref{fig:spectrum}. Their inequality explains why an exact flip requires postselection.
\begin{figure}[tbp]
\centering
\begin{tikzpicture}
\begin{semilogxaxis}[width=.85\textwidth,height=6cm,xlabel={$Y=b^3/a^3$},ylabel={singular value},xmin=1e-4,xmax=1e4,ymin=.72,ymax=1.35,legend style={at={(.5,1.03)},anchor=south,legend columns=3}]
\addplot[routeblue,very thick,domain=-4:4,samples=121] ({10^x},{sqrt((1+(10^x)^(1/3))/(1+10^x)^(1/3))});
\addlegendentry{$(a+b)^{1/2}$}
\addplot[black,densely dotted,thick,domain=-4:4] ({10^x},1);
\addlegendentry{$1$}
\addplot[routeorange,dashed,very thick,domain=-4:4,samples=121] ({10^x},{1/sqrt((1+(10^x)^(1/3))/(1+10^x)^(1/3))});
\addlegendentry{$(a+b)^{-1/2}$}
\end{semilogxaxis}
\end{tikzpicture}
\caption{The three singular values of the flip operator, each of multiplicity three. The curves plot the exact formulas in Proposition~\ref{prop:norm}. The operator norm and inverse norm agree, and the spectrum is symmetric under $Y\mapsto Y^{-1}$.}\label{fig:spectrum}
\end{figure}

\subsection{Optimal exact normalization of one step}
A success Kraus operator $F$ must satisfy $F^\dagger F\le I$. Conditional on success, its output on a unit vector $\psi$ is $F\psi/\norm{F\psi}$, and the success probability is $\norm{F\psi}^2$.

\begin{Lemma}\label{lem:kraus}
For an invertible target $T$, the maximal squared amplitude of an exact scalar success branch is $1/\norm{T}^2$. The same bound applies to the total squared amplitude of multiple corrected accepted branches that realize the same output ray for every input.
\end{Lemma}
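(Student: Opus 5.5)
The first claim is a short operator inequality. Write the exact scalar branch as $F=cT$ with $c\in\C$. The Kraus condition $F^\dagger F\le I$ is equivalent to $|c|^2\,T^\dagger T\le I$. Since the largest eigenvalue of $T^\dagger T$ is $\norm{T}^2$, this holds if and only if $|c|^2\le 1/\norm{T}^2$. Equality is attained by $c=1/\norm{T}$. This $F$ is a contraction, so it extends to a quantum instrument: take a failure branch $G=(I-F^\dagger F)^{1/2}$, or any Stinespring dilation. Hence the maximal squared amplitude is exactly $1/\norm{T}^2$. Invertibility of $T$ is used only so that the conditional output $F\psi/\norm{F\psi}$ is defined for every unit $\psi$. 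For the later application, Proposition~\ref{prop:norm} then gives the value $1/(a+b)$.

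For the multiple-branch statement, I would formalize the hypothesis as follows. There are accepted Kraus operators $F_1,\dots,F_r$ with $\sum_i F_i^\dagger F_i\le I$. Each branch, possibly after a correction unitary $U_i$ (which does not change $F_i^\dagger F_i$), realizes the same output ray as $T$ for every input. So for every $\psi$ we have $U_iF_i\psi\in\C\,T\psi$.

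The key step is to show that each $U_iF_i$ is a scalar multiple of $T$. Put $G_i=U_iF_iT^{-1}$. Then $G_i\phi\in\C\phi$ for every vector $\phi$, since $T$ is invertible and so $\phi=T\psi$ ranges over all vectors. A linear map that sends every vector to a multiple of itself is a scalar. The proof: every vector is an eigenvector, and if two independent vectors had distinct eigenvalues, their sum would fail to be an eigenvector. So $F_i=c_iU_i^\dagger T$. The conditional output is unchanged once $U_i$ is applied.

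Summing, $\sum_i F_i^\dagger F_i=\bigl(\sum_i|c_i|^2\bigr)T^\dagger T\le I$. By the first part, $\sum_i|c_i|^2\le 1/\norm{T}^2$. Moreover, the total accepted probability $\sum_i\norm{F_i\psi}^2=\bigl(\sum_i|c_i|^2\bigr)\norm{T\psi}^2$ has the same bound as a single optimal branch.

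The main obstacle is making the phrase "realize the same output ray for every input" precise, so that the scalar-lemma argument applies. In particular, the correction may depend only on the branch label and not on the input; otherwise a nonlinear, input-dependent correction could escape the argument. I would state this assumption explicitly and note that it matches the model in the introduction, where auxiliary unitary operations are fixed.
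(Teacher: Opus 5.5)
Your proposal is correct and follows the paper's proof essentially step for step. Both arguments bound a single branch via $|c|^2T^\dagger T\le I$, with equality attained using the failure operator $(I-T^\dagger T/\norm{T}^2)^{1/2}$; both then use that an operator sending every vector to a multiple of itself is scalar, so each corrected branch equals $c_\alpha T$, and completeness gives $\sum_\alpha|c_\alpha|^2\le1/\norm{T}^2$. The only cosmetic differences are that you apply the scalar lemma to $U_iF_iT^{-1}$ rather than to $T^{-1}K_\alpha$, and that you state explicitly that the correction unitaries are input-independent, which the paper leaves implicit.
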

\begin{proof}
For $F=cT$, the Kraus inequality is equivalent to $|c|^2\le1/\norm{T}^2$. Equality is realized by $F=T/\norm{T}$ together with the failure operator $(I-T^\dagger T/\norm{T}^2)^{1/2}$.
For a corrected accepted branch $K_\alpha$, exactness means that $T^{-1}K_\alpha\psi$ is proportional to $\psi$ for every vector. A linear operator with this property is scalar: apply it to basis vectors and their pairwise sums. Hence $K_\alpha=c_\alpha T$. Completeness gives
$\sum_\alpha|c_\alpha|^2T^\dagger T\le I$, which proves the assertion.
\end{proof}

This is the familiar norm-normalization principle \cite{Terashima}, specialized here to a mutation operator. It does not require a particular physical dilation.

\subsection{From projective equivalence to path probabilities}
A dotted triangulation consists of a triangulation, a distinguished corner of each triangle, and labels on the triangles. In the cyclic representation, one qutrit is assigned to each triangle, so the state space for an $m$-gon is $(\C^3)^{\otimes(m-2)}$. Ordinary flips lift to the dotted setting after dot rotations and label changes. Those adjustments are unitary and incur no success loss in our model. Operator products are taken in the representation's composition convention, with the rightmost factor acting first.

For completeness, the dictionary with \cite[Section 4.4]{Ishibashi} is $U=Z$ and $P=X$. Its finite Fourier sum gives
\begin{equation*}
\frac13\sum_{r,s=0}^2\omega^{rs}Z_2^rX_1^s\ket{i,j}
=\ket{i-j,j},
\end{equation*}
so its operator $S_{12}$ is exactly our $S$. Its cyclic difference equation gives
\begin{equation*}
D(K)=\Psi_{(b,a)}(K)/\Psi_{(b,a)}(1);
\end{equation*}
see \cite[Lemmas 2.5 and 2.7]{Ishibashi}. Up to a phase, its dot rotation has matrix entries
\begin{equation*}
A_{mk}=3^{-1/2}q^{-m^2}\omega^{-km},\qquad m,k\in\mathbb Z/3\mathbb Z.
\end{equation*}
This is a diagonal unitary times a normalized Fourier matrix. Thus dot rotations and label permutations are unitary in the standard inner product used here. All recurrence factors are nonzero for positive $a,b$; the normalized finite formulas extend the projective identities across any auxiliary branch cuts in the scalar normalization of $\Psi$.

We use the projective representation theorem of \cite[Theorem 4.11]{Ishibashi}. The normalization $d_0=1$ in~\eqref{eq:T} rescales the cyclic dilogarithm by a nonzero scalar and preserves projectivity. Therefore, for two lifted paths with the same endpoints and compatible coefficients,
\begin{equation}\label{eq:projective}
M_\gamma=cM_\eta,\qquad c\in\C^*.
\end{equation}
Here $M_\gamma$ is the unnormalized product, including the unitary adjustments. Tensor embedding preserves the modulus-one determinant property in~\eqref{eq:norm}. Consequently, if the full state-space dimension is $d$, then
\begin{equation*}
1=\frac{|\det M_\gamma|}{|\det M_\eta|}=|c|^d,
\end{equation*}
so $|c|=1$. Projective equivalence is thus sufficient to compare success probabilities.

\begin{Proposition}[Path cost]\label{prop:path}
Define
\begin{equation*}
W_\gamma=\sum_{e\in\gamma}\log\norm{T_e}^2.
\end{equation*}
The all-success branch and its probability are
\begin{equation*}
F_\gamma=e^{-W_\gamma/2}M_\gamma,\qquad
P_\gamma(\psi)=e^{-W_\gamma}\norm{M_\gamma\psi}^2.
\end{equation*}
For two paths with fixed common lifted endpoints,
\begin{equation*}
\frac{P_\gamma(\psi)}{P_\eta(\psi)}=e^{W_\eta-W_\gamma}.
\end{equation*}
Thus a path maximizes success for one nonzero input if and only if it does so for every input.
\end{Proposition}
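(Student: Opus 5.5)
The plan is to build the all-success branch one step at a time using Lemma~\ref{lem:kraus} and Proposition~\ref{prop:norm}, and then to compare two paths through the projective identity~\eqref{eq:projective}.

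Write the lifted path $\gamma$ as a product $M_\gamma=U_r T_{e_r}U_{r-1}\cdots U_1T_{e_1}U_0$. Here the $U_i$ are the unitary dot rotations and label changes, and each $T_{e}$ is the flip operator $T(a_e,b_e)$ tensored with the identity on the remaining qutrits. Tensoring with the identity does not change the operator norm, so $\norm{T_e}^2=a_e+b_e$ by~\eqref{eq:norm}.

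By Lemma~\ref{lem:kraus}, the accepted branch at the flip step $e$ is the operator $T_e/\norm{T_e}$. The unitary steps are deterministic, with branch equal to the operator itself. Composing the accepted branches of successive steps gives the all-success Kraus operator
\[
F_\gamma=\prod_{e\in\gamma}\norm{T_e}^{-1}\,M_\gamma=e^{-W_\gamma/2}M_\gamma .
\]
The probability that every step succeeds is the squared norm of the composed branch applied to $\psi$. This follows by iterating the conditional probabilities: the renormalizations of the intermediate states telescope, so the product of the conditional success probabilities equals $\norm{F_\gamma\psi}^2$. This yields $P_\gamma(\psi)=e^{-W_\gamma}\norm{M_\gamma\psi}^2$.

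For two paths $\gamma,\eta$ with the same lifted endpoints, \eqref{eq:projective} gives $M_\gamma=cM_\eta$. The determinant argument following~\eqref{eq:projective} shows that $|c|=1$. Hence $\norm{M_\gamma\psi}=\norm{M_\eta\psi}$ for every $\psi$. The ratio $P_\gamma(\psi)/P_\eta(\psi)$ then reduces to $e^{W_\eta-W_\gamma}$, and this quantity is independent of $\psi$. Since $M_\eta$ is invertible, $\norm{M_\eta\psi}\neq0$ for nonzero $\psi$, so the ratio is well defined.

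The final claim follows because the ordering of paths by $P_\gamma(\psi)$ coincides with the ordering by $-W_\gamma$ for every nonzero $\psi$. A path is therefore optimal for one nonzero input exactly when it minimizes $W$, which is exactly when it is optimal for every input.

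The only delicate point is the sequential composition. One must check that conditioning and renormalizing after each step gives the same total success probability as the product branch. This is the standard telescoping identity $\prod_i\norm{F_i\phi_{i-1}}^2=\norm{F_r\cdots F_1\psi}^2$ with $\phi_i=F_i\phi_{i-1}/\norm{F_i\phi_{i-1}}$, and I expect it to be routine. The substantive input, $|c|=1$, has already been established above the statement.
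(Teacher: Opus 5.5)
Your proposal is correct and follows the paper's own proof: you multiply the per-flip normalizations $1/\norm{T_e}$ from Lemma~\ref{lem:kraus}, with the unitary adjustments costing nothing, and then use $M_\gamma=cM_\eta$ with $|c|=1$, together with invertibility, to obtain the input-independent ratio $e^{W_\eta-W_\gamma}$. The only difference is that you spell out the telescoping of sequential conditional probabilities, which the paper's one-line proof leaves implicit.
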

\begin{proof}
Multiply the scalar normalizations from Lemma~\ref{lem:kraus} and use~\eqref{eq:projective} with $|c|=1$. All operators are invertible, so the probabilities in the ratio are nonzero.
\end{proof}

For comparison, unrestricted implementation of the endpoint map $M$ can use $M/\norm{M}$. Submultiplicativity gives
\begin{equation*}
W_\gamma\ge\log\norm{M}^2.
\end{equation*}
Our path optimization need not attain this lower bound. In particular, the exponential separation below is between sequential mutation paths; it is not an advantage over arbitrary circuit synthesis.

\section{The weighted flip graph and pentagon shortening}\label{sec:pentagon}
Equations~\eqref{eq:ab} and~\eqref{eq:norm} give the edge factor and cost
\begin{equation}\label{eq:g}
g(A,B)=\frac{A^{1/3}+B^{1/3}}{(A+B)^{1/3}},\qquad w=\log g(A,B).
\end{equation}
The factor is symmetric and homogeneous of degree zero. Reverse flips exchange $A$ and $B$, so the graph is undirected as a weighted graph. Independent flips preserve each other's quadrilateral data; hence opposite two-edge arcs of a square have equal cost. Since $g(A,B)>1$, deleting a nontrivial loop strictly decreases cost. On the finite graph $\mathcal F_m$, a weighted minimum therefore exists and is simple.

Figure~\ref{fig:localmoves} contrasts the two local relations used in the reduction argument.
\begin{figure}[tbp]
\centering
\begin{tikzpicture}[font=\small,scale=1.08,every node/.style={inner sep=2pt}]
\begin{scope}
\coordinate (s) at (0,1.2);\coordinate (a) at (1.3,0);\coordinate (t) at (0,-1.2);\coordinate (b) at (-1.3,0);
\draw[routeblue,very thick] (s)--(a)--(t);
\draw[routeorange,very thick,dashed] (s)--(b)--(t);
\foreach \v in {s,a,t,b}{\fill (\v) circle(2pt);}
\node[above] at (s){$U$};\node[below] at (t){$V$};
\node[right] at (a){$Q_1$};\node[left] at (b){$Q_2$};
\node at (0,-2){(a) $W_{\mathrm{blue}}=W_{\mathrm{orange}}$};
\end{scope}
\begin{scope}[xshift=6.5cm]
\coordinate (s) at (0,1.4);\coordinate (a) at (1.4,.35);\coordinate (t) at (.85,-1.2);
\coordinate (b) at (-.85,-1.2);\coordinate (c) at (-1.4,.35);
\draw[routeblue,very thick] (s)--(a)--(t);
\draw[routeorange,very thick,dashed] (s)--(c)--(b)--(t);
\foreach \v in {s,a,t,b,c}{\fill (\v) circle(2pt);}
\node[above] at (s){$U$};\node[below right] at (t){$V$};
\node[right] at (a){$Q_1$};\node[left] at (c){$Q_2$};\node[below left] at (b){$Q_3$};
\node at (0,-2){(b) $W_3>W_2$};
\end{scope}
\end{tikzpicture}
\caption{Local relations in the flip graph. Vertices denote triangulations. (a) Independent flips give two equal-cost square arcs. (b) The dashed three-edge pentagon arc costs strictly more than the solid two-edge arc. These local comparisons do not imply that every globally optimal path minimizes flip count.}\label{fig:localmoves}
\end{figure}

\begin{Proposition}[Strict pentagon inequality]\label{prop:pentagon}
Every three-edge arc of a pentagonal face has strictly greater cost than its complementary two-edge arc.
\end{Proposition}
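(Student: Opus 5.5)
The plan is to reduce the statement to a single two-variable inequality for the edge factor $g$ in~\eqref{eq:g}, and then prove that inequality by clearing denominators. First, the local geometry. A pentagonal face of $\mathcal F_m$ comes from a sub-pentagon with vertices $p_1<\dots<p_5$, while every other diagonal stays fixed. The five flips around the face each take place in a quadrilateral spanned by four of the $p_r$. So by Lemma~\ref{lem:coeff}, each flip cost depends only on the five homogeneous vectors $u_{p_1},\dots,u_{p_5}$. Since $g$ is homogeneous of degree zero and symmetric, we may write
\begin{equation*}
f(x)=\log\frac{1+x^{1/3}}{(1+x)^{1/3}},\qquad f(x)=f(x^{-1}),
\end{equation*}
and then each edge costs $f(x)$, where $x=A/B$ is the cross ratio of its quadrilateral.

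Next, use the Pl\"ucker relation~\eqref{eq:plucker} to express the five cross ratios met cyclically around the face through two free parameters. This is the familiar $A_2$ periodicity. Up to inversions, which do not affect $f$, the five values are
\begin{equation*}
x_1=s,\quad x_2=t,\quad x_3=\frac{1+t}{s},\quad x_4=\frac{1+s+t}{st},\quad x_5=\frac{1+s}{t},\qquad s,t>0.
\end{equation*}
I will verify this directly with $u_r=(1,t_r)$ by writing $s$ and $t$ as ratios of differences $t_r-t_{r'}$. The dihedral symmetry of the pentagon acts transitively on the five decompositions of the $5$-cycle into a two-edge arc and a three-edge arc, and it permutes the list $(x_1,\dots,x_5)$ up to inversion. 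It therefore suffices to prove the single inequality
\begin{equation*}
f(x_1)+f(x_2)<f(x_3)+f(x_4)+f(x_5)\qquad\text{for all } s,t>0.
\end{equation*}

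Now exponentiate and cube. Put $h(x)=(1+x^{1/3})^3/(1+x)$. Then the target inequality becomes
\begin{equation*}
h(x_1)\,h(x_2)<h(x_3)\,h(x_4)\,h(x_5).
\end{equation*}
The denominators simplify through the exchange relations, for example $1+x_3=x_2x_4$. After this simplification, both sides are rational in $s,t$ multiplied by cubes of sums of cube roots. Substituting $s=\sigma^3$ and $t=\tau^3$ removes the cube roots except in $(1+\sigma^3+\tau^3)^{1/3}$, $(1+\sigma^3)^{1/3}$ and $(1+\tau^3)^{1/3}$. Alternatively, I can substitute the Pl\"ucker variables $\mu_{pq}=\lambda_{pq}^{1/3}$ and compare products of terms $(\mu_{ij}\mu_{kl}+\mu_{il}\mu_{jk})^3$ against products of diagonals $\lambda_{ik}\lambda_{jl}$. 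Along the way I will use the elementary bounds $(A+B)^{1/3}<A^{1/3}+B^{1/3}\le 2^{2/3}(A+B)^{1/3}$. I will also check the degenerate limits to confirm that the inequality is strict but sharp only at the boundary. In the limit $s\to0$ we get $f(x_1)\to0$, and the remaining terms pair off, with one extra positive term left over.

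The main obstacle is this final two-variable inequality, since it does not follow from any termwise comparison. My first attempt will be to isolate the three-arc term $f(x_4)$, the one involving $1+s+t$. For it I would use the concavity of $y\mapsto\log(1+y)$ and the monotonicity of $f$ on $(0,1]$ and on $[1,\infty)$, reducing to the diagonal case $s=t$ plus a monotonicity argument in $s/t$. If that approach fails, I would clear all denominators in the cubed form and show that the difference of the two sides, written as a polynomial in $\sigma$, $\tau$ and the three remaining radicals, is a sum of terms each positive by AM--GM. A numerical check of the minimum over a grid would guide which grouping works.
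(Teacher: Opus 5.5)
Your reduction is correct. Up to inversion, the five cross-ratios around a pentagonal face are the $A_2$ $Y$-system values you list. The shift $(s,t)\mapsto\bigl(t,(1+t)/s\bigr)$ is a bijection of $(0,\infty)^2$ realizing the rotation, so the single inequality
\[
f(s)+f(t)<f\Bigl(\frac{1+t}{s}\Bigr)+f\Bigl(\frac{1+s+t}{st}\Bigr)+f\Bigl(\frac{1+s}{t}\Bigr)
\]
covers every arc. But this inequality is the whole content of the proposition, and the proposal does not prove it. It lists two possible attacks and leaves the choice of grouping to numerical experiment.

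You say the inequality should be sharp at the boundary, but you do not draw the consequences, and your description of the limit $s\to0$ is wrong. For fixed $t$ one has $x_3,x_4\to\infty$, so $f(x_1),f(x_3),f(x_4)\to0$ while $f(x_5)\to f(t^{-1})=f(t)$. Both sides tend to $f(t)$, with no positive term left over, and the difference is only $\bigl((1+t^{1/3})(1+t)^{-1/3}-1\bigr)s^{1/3}+O(s^{2/3})$. This has three consequences for your plan:
\begin{itemize}
\item The bounds $(A+B)^{1/3}<A^{1/3}+B^{1/3}\le2^{2/3}(A+B)^{1/3}$ cannot be applied to any term near the boundary.
\item A reduction to the diagonal cannot yield a lower bound. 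The difference tends to $0$ as $s/t\to0$ or $s/t\to\infty$ (for instance along $st=\mathrm{const}$), but is positive at $s=t$, so any monotonicity in $s/t$ runs the wrong way.
\item Clearing denominators leaves the three cube roots $(1+\sigma^3)^{1/3}$, $(1+\tau^3)^{1/3}$ and $(1+\sigma^3+\tau^3)^{1/3}$, and you give no mechanism for removing them.
\end{itemize}

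The missing idea is a choice of variables in which only one radical survives, and it appears in only one factor. The paper uses Fermat-normalized pairs $p=(a,b)$ and $r=(c,d)$ with $a^3+b^3=c^3+d^3=1$. In your notation, $a=(1+s)^{-1/3}$, $b=s^{1/3}a$, $c=\bigl((1+s)/(1+s+t)\bigr)^{1/3}$ and $d=\bigl(t/(1+s+t)\bigr)^{1/3}$, so $(1+s)^{1/3}$ and $(1+s+t)^{1/3}$ become rational. The only remaining radical is $e=(a^3+b^3d^3)^{1/3}=\bigl((1+t)/(1+s+t)\bigr)^{1/3}$, and the success ratio becomes
\[
\frac{P_3}{P_2}=\frac{(a+b)(ac+d)}{(c+d)(a+bd)(e+bc)}.
\]
Here $e$ occurs only in the factor $e+bc$, so the one-sided bound $e>\max(a,d)$ suffices. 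The resulting polynomial inequality $(c+d)(a+bd)\bigl(\max(a,d)+bc\bigr)\ge(a+b)(ac+d)$ is then proved by two explicit nonnegative decompositions, one for $a\ge d$ and one for $d\ge a$. These use only $0<a,b,c,d<1$ together with $a^2+b^2-1\ge0$ and $c^2+d^2-1\ge0$. Your proposal needs a certificate of this kind in place of its final paragraph.
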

\begin{proof}
Write $p=(a,b)$ and $r=(c,d)$, where $a^3+b^3=c^3+d^3=1$. Set
\begin{align*}
e&=(a^3+b^3d^3)^{1/3}=(d^3+a^3c^3)^{1/3},\\
p'&=(a/e,bd/e),\qquad r'=(ac/e,d/e),\qquad p''=(e,bc).
\end{align*}
All three pairs are positive and Fermat-normalized. The pentagon relation is
\begin{equation*}
T_{12}(r)T_{13}(p')T_{23}(p'')\doteq T_{23}(p)T_{12}(r'),
\end{equation*}
where $\doteq$ denotes equality up to a scalar. Their success ratio is
\begin{equation}\label{eq:pentagonratio}
\frac{P_3}{P_2}=\frac{(a+b)(ac+d)}{(c+d)(a+bd)(e+bc)}.
\end{equation}
Put
\begin{align*}
M&=(c+d)(a+bd),&V&=(a+b)(ac+d),\\
Q_1&=a^2+b^2-1,&Q_2&=c^2+d^2-1.
\end{align*}
Since $0<a,b,c,d<1$, both $Q_i$ are nonnegative. Also $e>\max(a,d)$.

If $a\ge d$, then $c\ge b$, and polynomial expansion gives
\begin{align*}
M(a+bc)-V={}&abQ_2+d[a(1-a/2)+bc]Q_1\notag\\
&+b(1-c)(a-d)+abd(1-a/2)(2c-b)\notag\\
&+\tfrac12a^2d(1-a)^2+b^2cd(c+d-b)\ge0.
\end{align*}
If $d\ge a$, then $b\ge c$, and
\begin{align*}
M(d+bc)-V={}&[ac+d(1-d)/2+bc(1-c)](d-a)\notag\\
&+\tfrac d2[a+2b+b^2+bc]Q_2+\tfrac{d^2(1-d)}2Q_1\notag\\
&+\tfrac d2[ac+b(1-d)+bc^2](b-c)\notag\\
&+\tfrac{ad^2(1-a)(1-d)}2\notag\\
&+\tfrac{bcd}2[a+2bd+d(1-d)]\ge0.
\end{align*}
Every summand is nonnegative in its respective case. Since $M>0$ and $e>\max(a,d)$, either identity implies $M(e+bc)>V$. Thus~\eqref{eq:pentagonratio} is strictly less than one. Cyclic relabelling treats each three-edge arc.
\end{proof}

This proves Theorem~\ref{thm:local}. The two identities are exact polynomial certificates; their rational coefficients may be checked by expansion without numerical approximation.

\section{Low-rank reduction and a seven-gon separation}\label{sec:separation}
\subsection{A finite reduction lemma}
\begin{Lemma}[Computer-assisted finite lemma]\label{lem:finite}
For $m=4,5,6$, every nongeodesic simple path in $\mathcal F_m$ reduces to a geodesic by square exchanges, pentagon three-to-two replacements, and loop deletion.
\end{Lemma}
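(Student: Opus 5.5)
The plan is to turn the statement into a finite, checkable certificate on the three graphs $\mathcal F_4$, $\mathcal F_5$, $\mathcal F_6$. These have $2$, $5$ and $14$ vertices. $\mathcal F_4$ is a single edge and $\mathcal F_5$ is a pentagon. $\mathcal F_6$ is the three-dimensional associahedron, whose $2$-faces are six pentagons and three squares. The $2$-faces of $\mathcal F_m$ are exactly the squares (pairs of independent flips) and pentagons (two flips inside a common hexagonal region sharing a triangle). So the allowed moves are precisely the elementary homotopies across $2$-cells, together with backtracking or loop deletion.

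I would first enumerate all simple paths in each graph. This is feasible: $\mathcal F_6$ has only $14$ vertices and degree $3$, so the number of simple paths is small. For each path I record its length and the flip distance between its endpoints, computed by breadth-first search. A path is nongeodesic when its length exceeds this distance.

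The core step is a greedy reduction search. Define the state space as all walks of length at most $13$ between a fixed pair of endpoints. Moves are:
\begin{itemize}
\item replacing a two-edge square arc by the opposite arc, which keeps the length;
\item replacing a three-edge pentagon arc by the complementary two-edge arc, which lowers the length by one;
\item deleting a closed subwalk, which lowers the length.
\end{itemize}
For each nongeodesic simple path I run a breadth-first search over this move set. The search uses only square exchanges, which preserve length, followed by any length-decreasing move. It continues until a path of geodesic length is reached.

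Termination is guaranteed because the length strictly decreases between the square-exchange phases, and each phase explores a finite set. The certificate is the explicit move sequence recorded for every starting path. A checker can verify each move against the face list without trusting the search.

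For $m=4,5$ the check is immediate. In $\mathcal F_5$, the only nongeodesic simple paths are the three- and four-edge arcs of the pentagon. A four-edge arc's endpoints are adjacent, so it is not a geodesic. I would handle it by a single pentagon replacement applied to its first three edges, which gives a two-edge path followed by a one-edge path. This leaves a walk of the form $x\to y\to x\to \dots$, which is removed by loop deletion.

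The main obstacle is $\mathcal F_6$. Here one must ensure that no nongeodesic path is trapped in a length-preserving square-exchange class containing no three-edge pentagon subarc and no backtrack. If such a path exists, one first needs square exchanges to bring three consecutive edges of a single pentagon together.

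I would first try to avoid exhaustive search altogether, using the fact that $\mathcal F_6$ is the $1$-skeleton of a simple polytope. Its $2$-cells generate all cycles, so any closed walk formed by a path together with a geodesic is null-homotopic through face moves. That argument alone does not control monotonicity of length, however. So the computer search, with its recorded certificates, remains the decisive step, and I would also confirm that the number of simple paths checked matches an independent count.
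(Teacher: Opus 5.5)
Your plan is correct and is essentially the paper's proof: an exhaustive computer enumeration of simple paths in $\mathcal F_6$, with $\mathcal F_4$ and $\mathcal F_5$ handled by hand exactly as you do. Paths are reduced using square exchanges, pentagon three-to-two replacements and loop deletion, and each reduction is recorded as a certificate checked against the face list; the only differences are organizational (the paper processes lengths increasingly, groups paths into simple square-exchange classes and certifies exits to already-certified shorter paths with a lexicographic rank, whereas you store one explicit move sequence per path), apart from a harmless slip where a pentagonal face comes from two diagonals bounding a common triangle, i.e.\ a pentagonal (not hexagonal) subpolygon obtained by fixing $m-5$ diagonals.
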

\begin{proof}[Finite certificate and verification]
Generate each triangulation as a set of $m-3$ pairwise noncrossing diagonals and join two states if their symmetric difference has cardinality two. Obtain two-dimensional faces by fixing $m-5$ diagonals: the resulting induced graphs are squares or pentagons. For $m=4$ no such faces are needed. Enumerate all simple paths between distinct unordered endpoint pairs, retaining one orientation per pair, and compute endpoint distances by breadth-first search.

Process paths in increasing length. Within each length, partition them into connected classes under square exchanges that remain simple. A nongeodesic class has an exit if one of its paths admits a pentagon shortening, or a square exchange producing a loop that can be deleted. Verify that every nongeodesic class has such an exit to a shorter, previously certified path. In each class, direct a spanning tree towards a chosen exit. The certificate rank of a path is its length followed by its distance to that exit. Every certified move strictly decreases this lexicographic rank. Terminal paths have length equal to their endpoint distance.

The complete counts are given in table~\ref{tab:finite}. For $m=6$, there are three square faces and six pentagonal faces; the greatest square distance in the selected witnesses is three. Every replacement is checked against the corresponding geometric face, and its endpoints and adjacency are verified. The executable verifier described in Appendix~\ref{app:certificate} reproduces this exhaustive check. Thus all paths in the finite domain are covered, and the rank proves termination.
\end{proof}

\begin{table}[tb]
\centering
\caption{Complete simple-path enumeration, with one direction for each unordered endpoint pair.}\label{tab:finite}
\begin{tabular}{rrrrr}\toprule
$m$ & States & Simple paths & Geodesics & Nongeodesics\\\midrule
4 & 2 & 1 & 1 & 0\\
5 & 5 & 20 & 10 & 10\\
6 & 14 & 8889 & 123 & 8766\\\bottomrule
\end{tabular}
\end{table}

The small cases clarify the role of the certificate. For $m=4$, the graph is a single edge, so every simple path is geodesic. For $m=5$, it is a pentagon. A nongeodesic simple path has either three or four edges. A three-edge path shortens directly to its complementary two-edge arc. Replacing the first three edges of a four-edge path by the complementary two-edge arc produces a backtrack, whose deletion leaves the one-edge geodesic. Thus only the hexagon case requires the exhaustive square-class calculation above.

The finite check is independent of the positive coefficient values. It establishes a combinatorial reduction for every path; Proposition~\ref{prop:pentagon} then makes each shortening strictly cost-decreasing for every admissible coefficient collection. The computation consequently proves a uniform low-rank statement, rather than a statement about a sampled set of weights.

Square exchanges preserve cost, while every actual shortening strictly decreases it by Proposition~\ref{prop:pentagon} and positivity. A nongeodesic simple path is therefore more costly than some geodesic. A nonsimple path cannot minimize positive weight. This proves the low-rank assertion of Theorem~\ref{thm:minimal}. It does not assert that all geodesics have equal weight.

\subsection{A human-readable uniqueness proof}
Let the vertices of a seven-gon be $1,\ldots,7$, and abbreviate a diagonal $(i,j)$ by $ij$. Consider
\begin{equation}\label{eq:endpoints}
\mathcal S=\{25,26,27,35\},\qquad \mathcal U=\{13,14,46,47\}.
\end{equation}
Figure~\ref{fig:endpoints} displays these triangulations.

\begin{figure}[tb]
\centering
\begin{tikzpicture}[scale=0.93]
\foreach \shift/\name in {0/S,5.7/U}{
\begin{scope}[xshift=\shift cm]
\foreach \i in {1,...,7}{\coordinate (\name\i) at ({90-360*(\i-1)/7}:1.8);\node at ({90-360*(\i-1)/7}:2.08){\small $\i$};}
\draw (\name1)--(\name2)--(\name3)--(\name4)--(\name5)--(\name6)--(\name7)--cycle;
\end{scope}}
\foreach \a/\b in {2/5,2/6,2/7,3/5}{\draw[thick] (S\a)--(S\b);}
\foreach \a/\b in {1/3,1/4,4/6,4/7}{\draw[thick] (U\a)--(U\b);}
\node at (0,-2.5){$\mathcal S$};\node at (5.7,-2.5){$\mathcal U$};
\end{tikzpicture}
\caption{The seven-gon endpoints. No diagonal is shared. The unique geodesic has five flips, while a six-flip path has lower implementation cost for the coefficients used below.}\label{fig:endpoints}
\end{figure}

\begin{Lemma}\label{lem:unique}
The distance from $\mathcal S$ to $\mathcal U$ is five, and the unique geodesic is
\begin{equation}\label{eq:five}
35\to24,\quad25\to46,\quad26\to47,\quad27\to14,\quad24\to13.
\end{equation}
\end{Lemma}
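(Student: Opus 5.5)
The plan is a counting argument combined with a short case analysis on the first flip. An exhaustive breadth-first search in $\mathcal F_7$, which has $42$ vertices, serves as an independent check, in the spirit of Lemma~\ref{lem:finite}.

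\textbf{Upper bound.} I will check directly that \eqref{eq:five} is a valid flip sequence. At each step I list the current triangulation, identify the two triangles containing the diagonal being flipped, and confirm that the new diagonal is the other diagonal of that quadrilateral. For example, the first flip $35\to24$ takes place in the quadrilateral $2345$. After the fifth flip the triangulation is $\mathcal U$, so the distance is at most five.

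\textbf{Lower bound.} Each flip removes exactly one diagonal and inserts exactly one. Since $\mathcal S\cap\mathcal U=\emptyset$, all four diagonals of $\mathcal S$ must be removed and all four of $\mathcal U$ inserted. Hence every path has at least four flips. Equality would force every flip to replace a diagonal of $\mathcal S$ directly by a diagonal of $\mathcal U$, so in particular the first flip would have to create a diagonal of $\mathcal U$.

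The triangles of $\mathcal S$ are $127,235,345,256,267$. The four possible first flips are therefore
\begin{equation*}
35\to24,\qquad 25\to36,\qquad 26\to57,\qquad 27\to16,
\end{equation*}
performed in the quadrilaterals $2345$, $2356$, $2567$ and $1267$ respectively. None of the new diagonals $24,36,57,16$ lies in $\mathcal U$, so the distance is at least five.

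\textbf{Uniqueness.} In a five-flip path there are five insertions and five removals. Four insertions must be the diagonals of $\mathcal U$ and four removals the diagonals of $\mathcal S$. Consequently exactly one auxiliary diagonal $x\notin\mathcal S\cup\mathcal U$ is created, by the first flip as shown above, and is later removed by a flip that creates a diagonal of $\mathcal U$. Every other flip is a direct $\mathcal S\to\mathcal U$ replacement.

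I then branch on $x\in\{24,36,57,16\}$. In each branch, the second flip must remove a remaining diagonal of $\mathcal S$, or $x$ itself, and insert a diagonal of $\mathcal U$. At each stage I list the at most four available flips and discard those whose new diagonal is not in $\mathcal U$.
\begin{itemize}[nosep]
\item For $x=36$, $57$ or $16$, I expect every continuation to produce a non-$\mathcal U$ diagonal immediately, contradicting the count.
\item For $x=24$, I expect the forced continuation to be $25\to46$, then $26\to47$, then $27\to14$, and finally $24\to13$. This is exactly \eqref{eq:five}, and each step should have only one admissible flip.
\end{itemize}

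\textbf{Main obstacle.} The difficulty is the bookkeeping in the branches $x=36$ and $x=57$. There the triangulation changes, so new quadrilaterals must be recomputed carefully at each step. I would organise these branches as a small table of triangulations and quadrilaterals. As backup, I would cite a breadth-first search over all $42$ triangulations confirming distance five and a unique shortest path.
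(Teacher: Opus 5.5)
Your plan is essentially the paper's proof. The lower bound counts the four missing target diagonals and checks the four first flips ($35\to24$, $25\to36$, $26\to57$, $27\to16$). Uniqueness follows because in a five-flip path every flip after the first must insert a diagonal of $\mathcal U$, and this forces the branch $x=24$ and then the sequence~\eqref{eq:five}.

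One prediction in your case analysis is wrong. The branch $x=36$ does not fail at the second flip. From $\{26,27,35,36\}$ the flip $35\to46$, in the quadrilateral $3456$, inserts a target diagonal, so this branch needs one more level. The resulting state $\{26,27,36,46\}$ admits only $36\to24$, $26\to37$, $27\to16$ and $46\to35$, and none of these inserts a diagonal of $\mathcal U$.

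The branches $x=57$ and $x=16$ do fail immediately. Their second flips insert only $37,26,15,24$ and $36,15,27,24$, respectively. With the extra level for $x=36$, your table-based check completes the argument exactly as in the paper.
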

\begin{proof}
There are four target diagonals, none present initially. The possible first flips replace $25,26,27,35$ by $36,57,16,24$, respectively. None inserts a target diagonal. At least five flips are therefore required, and~\eqref{eq:five} shows that five suffice.

In a five-flip path, every step after the first must increase the number of target diagonals by exactly one. If the first flip inserts $57$ or $16$, no next flip inserts a target diagonal. If the first inserts $36$, the only target-inserting next flip is $35\to46$. The resulting state $\{26,27,36,46\}$ has no target-inserting flip: its new diagonals would be $37,16,24,35$. These first three possibilities are excluded.

It follows that the first flip is $35\to24$. From $\{24,25,26,27\}$ the only target-inserting flip is $25\to46$. The same inspection in the next states forces $26\to47$, then $27\to14$, then $24\to13$. This proves uniqueness.
\end{proof}

\subsection{Exact success comparison}
Assign affine coordinates
\begin{equation*}
t=(0,1,2,3,100,101,200)
\end{equation*}
and take $\lambda_{ij}=t_j-t_i$. Besides~\eqref{eq:five}, use the legal path
\begin{equation*}
\begin{split}
25\to36,\quad35\to46,\quad26\to37,\\
36\to47,\quad27\to13,\quad37\to14.
\end{split}
\end{equation*}
Figure~\ref{fig:paths} displays the two complete paths, including every intermediate triangulation.
\begin{figure}[tbp]
\centering
\begin{tikzpicture}[font=\scriptsize]
\node[anchor=west,font=\small\bfseries] at (-.8,1.45){Five flips};
\begin{scope}[shift={(1.04,0.0)}]
\coordinate (p0n0v1) at (90.0:.62);
\node[font=\tiny,inner sep=0pt] at (90.0:.79){$1$};
\coordinate (p0n0v2) at (38.57142857142857:.62);
\node[font=\tiny,inner sep=0pt] at (38.57142857142857:.79){$2$};
\coordinate (p0n0v3) at (-12.857142857142861:.62);
\node[font=\tiny,inner sep=0pt] at (-12.857142857142861:.79){$3$};
\coordinate (p0n0v4) at (-64.28571428571428:.62);
\node[font=\tiny,inner sep=0pt] at (-64.28571428571428:.79){$4$};
\coordinate (p0n0v5) at (-115.71428571428572:.62);
\node[font=\tiny,inner sep=0pt] at (-115.71428571428572:.79){$5$};
\coordinate (p0n0v6) at (-167.14285714285717:.62);
\node[font=\tiny,inner sep=0pt] at (-167.14285714285717:.79){$6$};
\coordinate (p0n0v7) at (-218.57142857142856:.62);
\node[font=\tiny,inner sep=0pt] at (-218.57142857142856:.79){$7$};
\draw[gray] (p0n0v1)--(p0n0v2)--(p0n0v3)--(p0n0v4)--(p0n0v5)--(p0n0v6)--(p0n0v7)--cycle;
\draw[routeblue,thin] (p0n0v2)--(p0n0v5);
\draw[routeblue,thin] (p0n0v2)--(p0n0v6);
\draw[routeblue,thin] (p0n0v2)--(p0n0v7);
\draw[routeblue,thin] (p0n0v3)--(p0n0v5);
\node at (0,-1.12){$S_0$};
\end{scope}
\draw[-{Stealth},gray] (1.88,0.0)--(2.2800000000000002,0.0);
\begin{scope}[shift={(3.12,0.0)}]
\coordinate (p0n1v1) at (90.0:.62);
\node[font=\tiny,inner sep=0pt] at (90.0:.79){$1$};
\coordinate (p0n1v2) at (38.57142857142857:.62);
\node[font=\tiny,inner sep=0pt] at (38.57142857142857:.79){$2$};
\coordinate (p0n1v3) at (-12.857142857142861:.62);
\node[font=\tiny,inner sep=0pt] at (-12.857142857142861:.79){$3$};
\coordinate (p0n1v4) at (-64.28571428571428:.62);
\node[font=\tiny,inner sep=0pt] at (-64.28571428571428:.79){$4$};
\coordinate (p0n1v5) at (-115.71428571428572:.62);
\node[font=\tiny,inner sep=0pt] at (-115.71428571428572:.79){$5$};
\coordinate (p0n1v6) at (-167.14285714285717:.62);
\node[font=\tiny,inner sep=0pt] at (-167.14285714285717:.79){$6$};
\coordinate (p0n1v7) at (-218.57142857142856:.62);
\node[font=\tiny,inner sep=0pt] at (-218.57142857142856:.79){$7$};
\draw[gray] (p0n1v1)--(p0n1v2)--(p0n1v3)--(p0n1v4)--(p0n1v5)--(p0n1v6)--(p0n1v7)--cycle;
\draw[routeorange,very thick] (p0n1v2)--(p0n1v4);
\draw[routeblue,thin] (p0n1v2)--(p0n1v5);
\draw[routeblue,thin] (p0n1v2)--(p0n1v6);
\draw[routeblue,thin] (p0n1v2)--(p0n1v7);
\node at (0,-1.12){$S_1$};
\end{scope}
\draw[-{Stealth},gray] (3.96,0.0)--(4.36,0.0);
\begin{scope}[shift={(5.2,0.0)}]
\coordinate (p0n2v1) at (90.0:.62);
\node[font=\tiny,inner sep=0pt] at (90.0:.79){$1$};
\coordinate (p0n2v2) at (38.57142857142857:.62);
\node[font=\tiny,inner sep=0pt] at (38.57142857142857:.79){$2$};
\coordinate (p0n2v3) at (-12.857142857142861:.62);
\node[font=\tiny,inner sep=0pt] at (-12.857142857142861:.79){$3$};
\coordinate (p0n2v4) at (-64.28571428571428:.62);
\node[font=\tiny,inner sep=0pt] at (-64.28571428571428:.79){$4$};
\coordinate (p0n2v5) at (-115.71428571428572:.62);
\node[font=\tiny,inner sep=0pt] at (-115.71428571428572:.79){$5$};
\coordinate (p0n2v6) at (-167.14285714285717:.62);
\node[font=\tiny,inner sep=0pt] at (-167.14285714285717:.79){$6$};
\coordinate (p0n2v7) at (-218.57142857142856:.62);
\node[font=\tiny,inner sep=0pt] at (-218.57142857142856:.79){$7$};
\draw[gray] (p0n2v1)--(p0n2v2)--(p0n2v3)--(p0n2v4)--(p0n2v5)--(p0n2v6)--(p0n2v7)--cycle;
\draw[routeblue,thin] (p0n2v2)--(p0n2v4);
\draw[routeblue,thin] (p0n2v2)--(p0n2v6);
\draw[routeblue,thin] (p0n2v2)--(p0n2v7);
\draw[routeorange,very thick] (p0n2v4)--(p0n2v6);
\node at (0,-1.12){$S_2$};
\end{scope}
\draw[-{Stealth},gray] (6.04,0.0)--(6.44,0.0);
\begin{scope}[shift={(7.28,0.0)}]
\coordinate (p0n3v1) at (90.0:.62);
\node[font=\tiny,inner sep=0pt] at (90.0:.79){$1$};
\coordinate (p0n3v2) at (38.57142857142857:.62);
\node[font=\tiny,inner sep=0pt] at (38.57142857142857:.79){$2$};
\coordinate (p0n3v3) at (-12.857142857142861:.62);
\node[font=\tiny,inner sep=0pt] at (-12.857142857142861:.79){$3$};
\coordinate (p0n3v4) at (-64.28571428571428:.62);
\node[font=\tiny,inner sep=0pt] at (-64.28571428571428:.79){$4$};
\coordinate (p0n3v5) at (-115.71428571428572:.62);
\node[font=\tiny,inner sep=0pt] at (-115.71428571428572:.79){$5$};
\coordinate (p0n3v6) at (-167.14285714285717:.62);
\node[font=\tiny,inner sep=0pt] at (-167.14285714285717:.79){$6$};
\coordinate (p0n3v7) at (-218.57142857142856:.62);
\node[font=\tiny,inner sep=0pt] at (-218.57142857142856:.79){$7$};
\draw[gray] (p0n3v1)--(p0n3v2)--(p0n3v3)--(p0n3v4)--(p0n3v5)--(p0n3v6)--(p0n3v7)--cycle;
\draw[routeblue,thin] (p0n3v2)--(p0n3v4);
\draw[routeblue,thin] (p0n3v2)--(p0n3v7);
\draw[routeblue,thin] (p0n3v4)--(p0n3v6);
\draw[routeorange,very thick] (p0n3v4)--(p0n3v7);
\node at (0,-1.12){$S_3$};
\end{scope}
\draw[-{Stealth},gray] (8.120000000000001,0.0)--(8.52,0.0);
\begin{scope}[shift={(9.36,0.0)}]
\coordinate (p0n4v1) at (90.0:.62);
\node[font=\tiny,inner sep=0pt] at (90.0:.79){$1$};
\coordinate (p0n4v2) at (38.57142857142857:.62);
\node[font=\tiny,inner sep=0pt] at (38.57142857142857:.79){$2$};
\coordinate (p0n4v3) at (-12.857142857142861:.62);
\node[font=\tiny,inner sep=0pt] at (-12.857142857142861:.79){$3$};
\coordinate (p0n4v4) at (-64.28571428571428:.62);
\node[font=\tiny,inner sep=0pt] at (-64.28571428571428:.79){$4$};
\coordinate (p0n4v5) at (-115.71428571428572:.62);
\node[font=\tiny,inner sep=0pt] at (-115.71428571428572:.79){$5$};
\coordinate (p0n4v6) at (-167.14285714285717:.62);
\node[font=\tiny,inner sep=0pt] at (-167.14285714285717:.79){$6$};
\coordinate (p0n4v7) at (-218.57142857142856:.62);
\node[font=\tiny,inner sep=0pt] at (-218.57142857142856:.79){$7$};
\draw[gray] (p0n4v1)--(p0n4v2)--(p0n4v3)--(p0n4v4)--(p0n4v5)--(p0n4v6)--(p0n4v7)--cycle;
\draw[routeorange,very thick] (p0n4v1)--(p0n4v4);
\draw[routeblue,thin] (p0n4v2)--(p0n4v4);
\draw[routeblue,thin] (p0n4v4)--(p0n4v6);
\draw[routeblue,thin] (p0n4v4)--(p0n4v7);
\node at (0,-1.12){$S_4$};
\end{scope}
\draw[-{Stealth},gray] (10.2,0.0)--(10.6,0.0);
\begin{scope}[shift={(11.440000000000001,0.0)}]
\coordinate (p0n5v1) at (90.0:.62);
\node[font=\tiny,inner sep=0pt] at (90.0:.79){$1$};
\coordinate (p0n5v2) at (38.57142857142857:.62);
\node[font=\tiny,inner sep=0pt] at (38.57142857142857:.79){$2$};
\coordinate (p0n5v3) at (-12.857142857142861:.62);
\node[font=\tiny,inner sep=0pt] at (-12.857142857142861:.79){$3$};
\coordinate (p0n5v4) at (-64.28571428571428:.62);
\node[font=\tiny,inner sep=0pt] at (-64.28571428571428:.79){$4$};
\coordinate (p0n5v5) at (-115.71428571428572:.62);
\node[font=\tiny,inner sep=0pt] at (-115.71428571428572:.79){$5$};
\coordinate (p0n5v6) at (-167.14285714285717:.62);
\node[font=\tiny,inner sep=0pt] at (-167.14285714285717:.79){$6$};
\coordinate (p0n5v7) at (-218.57142857142856:.62);
\node[font=\tiny,inner sep=0pt] at (-218.57142857142856:.79){$7$};
\draw[gray] (p0n5v1)--(p0n5v2)--(p0n5v3)--(p0n5v4)--(p0n5v5)--(p0n5v6)--(p0n5v7)--cycle;
\draw[routeorange,very thick] (p0n5v1)--(p0n5v3);
\draw[routeblue,thin] (p0n5v1)--(p0n5v4);
\draw[routeblue,thin] (p0n5v4)--(p0n5v6);
\draw[routeblue,thin] (p0n5v4)--(p0n5v7);
\node at (0,-1.12){$S_5$};
\end{scope}
\node[anchor=west,font=\small\bfseries] at (-.8,-2.05){Six flips};
\begin{scope}[shift={(0.0,-3.5)}]
\coordinate (p1n0v1) at (90.0:.62);
\node[font=\tiny,inner sep=0pt] at (90.0:.79){$1$};
\coordinate (p1n0v2) at (38.57142857142857:.62);
\node[font=\tiny,inner sep=0pt] at (38.57142857142857:.79){$2$};
\coordinate (p1n0v3) at (-12.857142857142861:.62);
\node[font=\tiny,inner sep=0pt] at (-12.857142857142861:.79){$3$};
\coordinate (p1n0v4) at (-64.28571428571428:.62);
\node[font=\tiny,inner sep=0pt] at (-64.28571428571428:.79){$4$};
\coordinate (p1n0v5) at (-115.71428571428572:.62);
\node[font=\tiny,inner sep=0pt] at (-115.71428571428572:.79){$5$};
\coordinate (p1n0v6) at (-167.14285714285717:.62);
\node[font=\tiny,inner sep=0pt] at (-167.14285714285717:.79){$6$};
\coordinate (p1n0v7) at (-218.57142857142856:.62);
\node[font=\tiny,inner sep=0pt] at (-218.57142857142856:.79){$7$};
\draw[gray] (p1n0v1)--(p1n0v2)--(p1n0v3)--(p1n0v4)--(p1n0v5)--(p1n0v6)--(p1n0v7)--cycle;
\draw[routeblue,thin] (p1n0v2)--(p1n0v5);
\draw[routeblue,thin] (p1n0v2)--(p1n0v6);
\draw[routeblue,thin] (p1n0v2)--(p1n0v7);
\draw[routeblue,thin] (p1n0v3)--(p1n0v5);
\node at (0,-1.12){$T_0$};
\end{scope}
\draw[-{Stealth},gray] (0.84,-3.5)--(1.2400000000000002,-3.5);
\begin{scope}[shift={(2.08,-3.5)}]
\coordinate (p1n1v1) at (90.0:.62);
\node[font=\tiny,inner sep=0pt] at (90.0:.79){$1$};
\coordinate (p1n1v2) at (38.57142857142857:.62);
\node[font=\tiny,inner sep=0pt] at (38.57142857142857:.79){$2$};
\coordinate (p1n1v3) at (-12.857142857142861:.62);
\node[font=\tiny,inner sep=0pt] at (-12.857142857142861:.79){$3$};
\coordinate (p1n1v4) at (-64.28571428571428:.62);
\node[font=\tiny,inner sep=0pt] at (-64.28571428571428:.79){$4$};
\coordinate (p1n1v5) at (-115.71428571428572:.62);
\node[font=\tiny,inner sep=0pt] at (-115.71428571428572:.79){$5$};
\coordinate (p1n1v6) at (-167.14285714285717:.62);
\node[font=\tiny,inner sep=0pt] at (-167.14285714285717:.79){$6$};
\coordinate (p1n1v7) at (-218.57142857142856:.62);
\node[font=\tiny,inner sep=0pt] at (-218.57142857142856:.79){$7$};
\draw[gray] (p1n1v1)--(p1n1v2)--(p1n1v3)--(p1n1v4)--(p1n1v5)--(p1n1v6)--(p1n1v7)--cycle;
\draw[routeblue,thin] (p1n1v2)--(p1n1v6);
\draw[routeblue,thin] (p1n1v2)--(p1n1v7);
\draw[routeblue,thin] (p1n1v3)--(p1n1v5);
\draw[routeorange,very thick] (p1n1v3)--(p1n1v6);
\node at (0,-1.12){$T_1$};
\end{scope}
\draw[-{Stealth},gray] (2.92,-3.5)--(3.3200000000000003,-3.5);
\begin{scope}[shift={(4.16,-3.5)}]
\coordinate (p1n2v1) at (90.0:.62);
\node[font=\tiny,inner sep=0pt] at (90.0:.79){$1$};
\coordinate (p1n2v2) at (38.57142857142857:.62);
\node[font=\tiny,inner sep=0pt] at (38.57142857142857:.79){$2$};
\coordinate (p1n2v3) at (-12.857142857142861:.62);
\node[font=\tiny,inner sep=0pt] at (-12.857142857142861:.79){$3$};
\coordinate (p1n2v4) at (-64.28571428571428:.62);
\node[font=\tiny,inner sep=0pt] at (-64.28571428571428:.79){$4$};
\coordinate (p1n2v5) at (-115.71428571428572:.62);
\node[font=\tiny,inner sep=0pt] at (-115.71428571428572:.79){$5$};
\coordinate (p1n2v6) at (-167.14285714285717:.62);
\node[font=\tiny,inner sep=0pt] at (-167.14285714285717:.79){$6$};
\coordinate (p1n2v7) at (-218.57142857142856:.62);
\node[font=\tiny,inner sep=0pt] at (-218.57142857142856:.79){$7$};
\draw[gray] (p1n2v1)--(p1n2v2)--(p1n2v3)--(p1n2v4)--(p1n2v5)--(p1n2v6)--(p1n2v7)--cycle;
\draw[routeblue,thin] (p1n2v2)--(p1n2v6);
\draw[routeblue,thin] (p1n2v2)--(p1n2v7);
\draw[routeblue,thin] (p1n2v3)--(p1n2v6);
\draw[routeorange,very thick] (p1n2v4)--(p1n2v6);
\node at (0,-1.12){$T_2$};
\end{scope}
\draw[-{Stealth},gray] (5.0,-3.5)--(5.4,-3.5);
\begin{scope}[shift={(6.24,-3.5)}]
\coordinate (p1n3v1) at (90.0:.62);
\node[font=\tiny,inner sep=0pt] at (90.0:.79){$1$};
\coordinate (p1n3v2) at (38.57142857142857:.62);
\node[font=\tiny,inner sep=0pt] at (38.57142857142857:.79){$2$};
\coordinate (p1n3v3) at (-12.857142857142861:.62);
\node[font=\tiny,inner sep=0pt] at (-12.857142857142861:.79){$3$};
\coordinate (p1n3v4) at (-64.28571428571428:.62);
\node[font=\tiny,inner sep=0pt] at (-64.28571428571428:.79){$4$};
\coordinate (p1n3v5) at (-115.71428571428572:.62);
\node[font=\tiny,inner sep=0pt] at (-115.71428571428572:.79){$5$};
\coordinate (p1n3v6) at (-167.14285714285717:.62);
\node[font=\tiny,inner sep=0pt] at (-167.14285714285717:.79){$6$};
\coordinate (p1n3v7) at (-218.57142857142856:.62);
\node[font=\tiny,inner sep=0pt] at (-218.57142857142856:.79){$7$};
\draw[gray] (p1n3v1)--(p1n3v2)--(p1n3v3)--(p1n3v4)--(p1n3v5)--(p1n3v6)--(p1n3v7)--cycle;
\draw[routeblue,thin] (p1n3v2)--(p1n3v7);
\draw[routeblue,thin] (p1n3v3)--(p1n3v6);
\draw[routeorange,very thick] (p1n3v3)--(p1n3v7);
\draw[routeblue,thin] (p1n3v4)--(p1n3v6);
\node at (0,-1.12){$T_3$};
\end{scope}
\draw[-{Stealth},gray] (7.08,-3.5)--(7.48,-3.5);
\begin{scope}[shift={(8.32,-3.5)}]
\coordinate (p1n4v1) at (90.0:.62);
\node[font=\tiny,inner sep=0pt] at (90.0:.79){$1$};
\coordinate (p1n4v2) at (38.57142857142857:.62);
\node[font=\tiny,inner sep=0pt] at (38.57142857142857:.79){$2$};
\coordinate (p1n4v3) at (-12.857142857142861:.62);
\node[font=\tiny,inner sep=0pt] at (-12.857142857142861:.79){$3$};
\coordinate (p1n4v4) at (-64.28571428571428:.62);
\node[font=\tiny,inner sep=0pt] at (-64.28571428571428:.79){$4$};
\coordinate (p1n4v5) at (-115.71428571428572:.62);
\node[font=\tiny,inner sep=0pt] at (-115.71428571428572:.79){$5$};
\coordinate (p1n4v6) at (-167.14285714285717:.62);
\node[font=\tiny,inner sep=0pt] at (-167.14285714285717:.79){$6$};
\coordinate (p1n4v7) at (-218.57142857142856:.62);
\node[font=\tiny,inner sep=0pt] at (-218.57142857142856:.79){$7$};
\draw[gray] (p1n4v1)--(p1n4v2)--(p1n4v3)--(p1n4v4)--(p1n4v5)--(p1n4v6)--(p1n4v7)--cycle;
\draw[routeblue,thin] (p1n4v2)--(p1n4v7);
\draw[routeblue,thin] (p1n4v3)--(p1n4v7);
\draw[routeblue,thin] (p1n4v4)--(p1n4v6);
\draw[routeorange,very thick] (p1n4v4)--(p1n4v7);
\node at (0,-1.12){$T_4$};
\end{scope}
\draw[-{Stealth},gray] (9.16,-3.5)--(9.56,-3.5);
\begin{scope}[shift={(10.4,-3.5)}]
\coordinate (p1n5v1) at (90.0:.62);
\node[font=\tiny,inner sep=0pt] at (90.0:.79){$1$};
\coordinate (p1n5v2) at (38.57142857142857:.62);
\node[font=\tiny,inner sep=0pt] at (38.57142857142857:.79){$2$};
\coordinate (p1n5v3) at (-12.857142857142861:.62);
\node[font=\tiny,inner sep=0pt] at (-12.857142857142861:.79){$3$};
\coordinate (p1n5v4) at (-64.28571428571428:.62);
\node[font=\tiny,inner sep=0pt] at (-64.28571428571428:.79){$4$};
\coordinate (p1n5v5) at (-115.71428571428572:.62);
\node[font=\tiny,inner sep=0pt] at (-115.71428571428572:.79){$5$};
\coordinate (p1n5v6) at (-167.14285714285717:.62);
\node[font=\tiny,inner sep=0pt] at (-167.14285714285717:.79){$6$};
\coordinate (p1n5v7) at (-218.57142857142856:.62);
\node[font=\tiny,inner sep=0pt] at (-218.57142857142856:.79){$7$};
\draw[gray] (p1n5v1)--(p1n5v2)--(p1n5v3)--(p1n5v4)--(p1n5v5)--(p1n5v6)--(p1n5v7)--cycle;
\draw[routeorange,very thick] (p1n5v1)--(p1n5v3);
\draw[routeblue,thin] (p1n5v3)--(p1n5v7);
\draw[routeblue,thin] (p1n5v4)--(p1n5v6);
\draw[routeblue,thin] (p1n5v4)--(p1n5v7);
\node at (0,-1.12){$T_5$};
\end{scope}
\draw[-{Stealth},gray] (11.24,-3.5)--(11.64,-3.5);
\begin{scope}[shift={(12.48,-3.5)}]
\coordinate (p1n6v1) at (90.0:.62);
\node[font=\tiny,inner sep=0pt] at (90.0:.79){$1$};
\coordinate (p1n6v2) at (38.57142857142857:.62);
\node[font=\tiny,inner sep=0pt] at (38.57142857142857:.79){$2$};
\coordinate (p1n6v3) at (-12.857142857142861:.62);
\node[font=\tiny,inner sep=0pt] at (-12.857142857142861:.79){$3$};
\coordinate (p1n6v4) at (-64.28571428571428:.62);
\node[font=\tiny,inner sep=0pt] at (-64.28571428571428:.79){$4$};
\coordinate (p1n6v5) at (-115.71428571428572:.62);
\node[font=\tiny,inner sep=0pt] at (-115.71428571428572:.79){$5$};
\coordinate (p1n6v6) at (-167.14285714285717:.62);
\node[font=\tiny,inner sep=0pt] at (-167.14285714285717:.79){$6$};
\coordinate (p1n6v7) at (-218.57142857142856:.62);
\node[font=\tiny,inner sep=0pt] at (-218.57142857142856:.79){$7$};
\draw[gray] (p1n6v1)--(p1n6v2)--(p1n6v3)--(p1n6v4)--(p1n6v5)--(p1n6v6)--(p1n6v7)--cycle;
\draw[routeblue,thin] (p1n6v1)--(p1n6v3);
\draw[routeorange,very thick] (p1n6v1)--(p1n6v4);
\draw[routeblue,thin] (p1n6v4)--(p1n6v6);
\draw[routeblue,thin] (p1n6v4)--(p1n6v7);
\node at (0,-1.12){$T_6$};
\end{scope}
\end{tikzpicture}
\caption{Every intermediate triangulation on the five-flip path (top) and six-flip path (bottom). The orange thick diagonal is the one inserted at that step; all other current diagonals are blue. Here $S_0=T_0=\mathcal S$ and $S_5=T_6=\mathcal U$. The six-flip route has lower cost at $L=100$, despite having one extra step.}\label{fig:paths}
\end{figure}

The costs are approximately
\begin{equation*}
W_5=1.591447397118036,\qquad W_6=1.316916488633198.
\end{equation*}
The underlying exact comparison is
\begin{equation}\label{eq:exactratio}
1.3159<\frac{P_6(\psi)}{P_5(\psi)}<1.3160.
\end{equation}
For clarity, this strict conclusion is certified with rational arithmetic, not inferred from the displayed decimals. For each positive integer $N$ occurring under a cube root, put $s=10^{30}$ and find the integer $h$ with $h^3\le Ns^3<(h+1)^3$. Then
\begin{equation*}
\frac hs\le\sqrt[3]{N}<\frac{h+1}s.
\end{equation*}
Applying these enclosures to each factor in~\eqref{eq:g}, multiplying positive lower and upper bounds, and dividing the resulting path products proves~\eqref{eq:exactratio}. The factors appear in table~\ref{tab:pairs} at $L=100$; the exact computation is included in the source package.

By Lemma~\ref{lem:unique}, the six-flip path outperforms every geodesic for every input. Together with Lemma~\ref{lem:finite}, this completes the proof of Theorem~\ref{thm:minimal}.

\section{A uniform one-parameter separation}\label{sec:family}
Keep the endpoints and paths, and set
\begin{equation*}
t(L)=(0,1,2,3,L,L+1,2L),\qquad L>3.
\end{equation*}
Let $W_5(L)$ and $W_6(L)$ be their costs. Define
\begin{equation*}
R(L)=\exp\bigl(W_5(L)-W_6(L)\bigr).
\end{equation*}
The successive arguments of $g$ are listed in table~\ref{tab:pairs}.

\begin{table}[tb]
\centering
\caption{Positive pairs $(A,B)$ for the two seven-gon paths. Homogeneity permits common factors to be cancelled.}\label{tab:pairs}
\small
\begin{tabular}{cll}\toprule
Step & Five flips & Six flips\\\midrule
1 & $(L-3,L-1)$ & $(1,L(L-2))$\\
2 & $(2,L(L-3))$ & $(1,(L-1)(L-3))$\\
3 & $(2(L-1),(2L-1)(L-2))$ & $(L-1,(2L-1)(L-1))$\\
4 & $(2L-3,4L)$ & $(L-1,(2L-2)(L-2))$\\
5 & $(1,3)$ & $(2L-2,2L)$\\
6 & --- & $(4L-6,2L)$\\\bottomrule
\end{tabular}
\end{table}

\begin{Proposition}\label{prop:family}
The ratio satisfies
\begin{equation*}
\lim_{L\to\infty}R(L)=C,\qquad R(L)>e^{2/15}\quad(L\ge1000),
\end{equation*}
where $C$ is defined in~\eqref{eq:C}.
\end{Proposition}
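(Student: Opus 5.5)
The plan is to compare the two products of edge factors term by term, using only two properties of the factor $g$ in~\eqref{eq:g}. First, $g$ is homogeneous of degree zero and symmetric, so $g(A,B)=h(x)$ with $x=\min(A,B)/\max(A,B)\in(0,1]$ and $h(x)=(1+x^{1/3})/(1+x)^{1/3}$. Second, $h$ is increasing on $(0,1]$, with $h(0^+)=1$ and $h(1)=2^{2/3}$. Monotonicity follows from $\frac{d}{dx}\log h=\frac{x^{-2/3}}{3(1+x^{1/3})}-\frac1{3(1+x)}$, which is positive because $x^{-2/3}(1+x)>1+x^{1/3}$ for $x<1$.

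For the limit, I read off the limiting ratios $x$ from Table~\ref{tab:pairs}.
\begin{itemize}
\item Five-flip path: steps $1,\dots,5$ have $x\to1,\,0,\,0,\,\tfrac12,\,\tfrac13$.
\item Six-flip path: steps $1,\dots,6$ have $x\to0,\,0,\,0,\,0,\,1,\,\tfrac12$. For step~6, $(4L-6,2L)$ gives $x=L/(2L-3)$.
\end{itemize}
By continuity of $h$ on $[0,1]$,
\begin{equation*}
\lim_{L\to\infty}R(L)=\frac{h(1)\,h(\tfrac12)\,h(\tfrac13)}{h(1)\,h(\tfrac12)}=h(\tfrac13)=\frac{1+\sqrt[3]{3}}{\sqrt[3]{4}}=C.
\end{equation*}
The factors $h(1)=2^{2/3}$ and $h(\tfrac12)=(1+2^{1/3})/3^{1/3}$ cancel exactly between the two paths.

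For the explicit bound at $L\ge1000$, I would bound $W_5(L)$ from below and $W_6(L)$ from above with elementary inequalities.
\begin{itemize}
\item Lower bound for $W_5$: drop steps 2 and 3, since their factors exceed $1$. Keep $h(\tfrac13)$ exactly from step 5. Step 1 has $x=(L-3)/(L-1)$ and step 4 has $x=(2L-3)/(4L)$, so monotonicity gives lower bounds $h(0.997)$ and $h(0.499)$ for all $L\ge1000$.
\item Upper bound for $W_6$: use $h(x)\le1+x^{1/3}$, hence $\log h(x)\le x^{1/3}$, on steps 1--4. Their ratios are at most $1/(L(L-2))$, $1/((L-1)(L-3))$, $1/(2L-1)$ and $1/(2L-4)$, so their total contribution is below about $0.01+0.01+0.08+0.08$. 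Bound step 5 by $h(1)=2^{2/3}$. For step 6, $x=L/(2L-3)\le 0.501$, so it contributes at most $\log h(0.501)$.
\end{itemize}
Combining these,
\begin{equation*}
\log R(L)\ge \log h(\tfrac13)-\bigl[\log h(1)-\log h(0.997)\bigr]-\bigl[\log h(0.501)-\log h(0.499)\bigr]-0.18 .
\end{equation*}
Here $\log h(\tfrac13)\approx0.431$, and the two bracketed differences are $O(10^{-3})$ by the derivative formula above. This gives $\log R(L)>0.24>2/15$.

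The only delicate point is making these numerical constants rigorous rather than approximate. I would either use the same rational cube-root enclosures as in the seven-gon certificate, or coarse hand bounds: for instance $\sqrt[3]{3}>1.44$, $\sqrt[3]{4}<1.588$, and $|h'|\le 1$ on $[0.49,1]$. The margin of roughly $0.1$ in the exponent is comfortable, so I expect this bookkeeping, rather than any conceptual issue, to be the main work.
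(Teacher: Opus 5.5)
Your proposal is correct and follows essentially the paper's argument: the same limits are read off from Table~\ref{tab:pairs}, and the same term pairing appears (discard five-flip steps 2--3, keep $g(1,3)$, match five-flip step 1 with six-flip step 5 and five-flip step 4 with six-flip step 6, and bound six-flip steps 1--4 by the cube root $x^{1/3}$). The only difference is bookkeeping: you use monotonicity of $h$ to bound the paired terms by their values at fixed endpoints ($0.997$, $0.499$, $0.501$, $1$) and then estimate those fixed differences, whereas the paper uses the Lipschitz bound $|df/d\log r|\le 1/3$ to obtain the explicit error $E(L)<1/5$, combined with $\log C>\log(3/2)>1/3$.
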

\begin{proof}
Put $f(r)=\log g(1,r)$ for $r>0$. Then $f(r)=f(r^{-1})$, and
\begin{equation*}
\frac{df}{d\log r}=\frac13\left(\frac{r^{1/3}}{1+r^{1/3}}-\frac r{1+r}\right).
\end{equation*}
In particular,
\begin{equation}\label{eq:fbound}
\left|\frac{df}{d\log r}\right|\le\frac13,\qquad
0<f(r)\le r^{1/3},\qquad f(r)\le f(1).
\end{equation}
The middle inequality follows from $\log(1+r^{1/3})\le r^{1/3}$; the last follows from the derivative's sign on either side of one.

Taking limits in table~\ref{tab:pairs}, the five-flip product tends to $g(1,1)g(1,2)g(1,3)$, while the six-flip product tends to $g(1,1)g(1,2)$. This proves the stated limit.

For a uniform estimate, keep the constant contribution $f(3)$ from the five-flip path and discard its second and third positive contributions. Bound its first contribution below by
$f(1)-\frac13\log((L-1)/(L-3))$. Bound the fifth contribution of the six-flip path above by $f(1)$. The difference between the fourth contribution of the five-flip path and the sixth contribution of the six-flip path is bounded below by
$-\frac23\log(2L/(2L-3))$, using~\eqref{eq:fbound} and symmetry. Bound the remaining four six-flip contributions by their small-argument cube roots. Therefore
\begin{equation*}
\log R(L)\ge\log C-E(L),
\end{equation*}
where
\begin{align*}
E(L)={}&\frac13\log\frac{L-1}{L-3}+\frac23\log\frac{2L}{2L-3}\notag\\
&+[L(L-2)]^{-1/3}+[(L-1)(L-3)]^{-1/3}\notag\\
&+(2L-1)^{-1/3}+[2(L-2)]^{-1/3}.
\end{align*}
For $L\ge1000$, the two logarithmic terms together are less than $1/100$, and the remaining terms are bounded by $1/90$, $1/90$, $1/12$, $1/12$, respectively. Their sum is less than $1/5$. Finally $\sqrt[3]{3}>7/5$ and $\sqrt[3]{4}<8/5$, so $C>3/2$, and
$\log(3/2)=\int_1^{3/2}x^{-1}\,dx>1/3$. Hence $\log R(L)>2/15$.
\end{proof}

This proves a separation for every real $L\ge1000$. It does not assert global monotonicity of $R$ or global optimality of the displayed six-flip path at every $L$.

Figure~\ref{fig:familycurve} illustrates the ratio and its limiting value; the proof of the uniform bound is analytic.
\begin{figure}[tbp]
\centering
\begin{tikzpicture}
\begin{semilogxaxis}[width=.86\textwidth,height=6.5cm,xlabel={$L$},ylabel={$R(L)=P_6/P_5$},xmin=3,xmax=1e6,ymin=.4,ymax=1.62,legend style={at={(.98,.03)},anchor=south east}]
\addplot[routeblue,very thick] coordinates {(3.01,0.47027868816) (3.276242594,0.604445287982) (3.566035062,0.662699441796) (3.881460453,0.707282969252) (4.224786069,0.745134356924) (4.598479758,0.77875893821) (5.005227658,0.809372714206) (5.447953505,0.837668301407) (5.929839643,0.864083484973) (6.454349906,0.888916319063) (7.025254512,0.9123814157) (7.646657166,0.934640216647) (8.323024556,0.955818498371) (9.05921846,0.976017094919) (9.860530695,0.995318783256) (10.73272115,1.01379288187) (11.6820592,1.0314984264) (12.71536875,1.04848642634) (13.84007731,1.06480150914) (15.06426937,1.08048314424) (16.3967445,1.09556657189) (17.84708065,1.11008352) (19.42570293,1.12406276559) (21.14395859,1.13753058052) (23.01419859,1.15051108959) (25.04986634,1.16302656126) (27.26559439,1.17509764601) (29.67730953,1.1867435735) (32.30234737,1.19798231698) (35.15957686,1.20883073137) (38.26953599,1.21930467003) (41.65457936,1.22941908419) (45.33903892,1.23918810808) (49.34939884,1.24862513228) (53.71448587,1.25774286719) (58.4656766,1.26655339838) (63.63712292,1.2750682349) (69.26599757,1.28329835193) (75.39276133,1.29125422832) (82.06145382,1.29894588004) (89.32001009,1.30638288995) (97.22060518,1.31357443455) (105.8200291,1.32052930802) (115.1800953,1.32725594399) (125.3680845,1.33376243529) (136.4572287,1.34005655203) (148.5272375,1.34614575808) (161.6648711,1.35203722628) (175.9645637,1.35773785247) (191.5291026,1.36325426849) (208.4703667,1.36859285428) (226.9101313,1.37375974912) (246.9809426,1.37876086224) (268.8270712,1.38360188264) (292.6055485,1.38828828847) (318.4872962,1.39282535578) (346.658354,1.39721816686) (377.3212176,1.40147161804) (410.696294,1.40559042721) (447.0234857,1.4095791409) (486.5639151,1.41344214101) (529.6018019,1.41718365129) (576.4465055,1.42080774343) (627.4347491,1.42431834297) (682.9330399,1.42771923491) (743.3403038,1.43101406906) (809.0907527,1.4342063652) (880.6570056,1.43729951802) (958.553486,1.44029680182) (1043.34012,1.44320137509) (1135.626359,1.44601628483) (1236.075564,1.44874447073) (1345.409772,1.45138876923) (1464.414884,1.45395191731) (1593.946319,1.45643655625) (1734.935157,1.45884523517) (1888.394838,1.46118041444) (2055.428439,1.46344446902) (2237.236612,1.46563969155) (2435.126207,1.46776829546) (2650.519668,1.46983241786) (2884.965261,1.47183412234) (3140.148196,1.47377540171) (3417.902749,1.47565818059) (3720.225438,1.47748431787) (4049.289383,1.47925560918) (4407.459918,1.48097378914) (4797.3116,1.48264053363) (5221.646712,1.48425746191) (5683.515406,1.48582613868) (6186.237628,1.48734807603) (6733.426982,1.4888247354) (7329.016706,1.49025752936) (7977.287943,1.4916478234) (8682.900514,1.49299693758) (9450.926414,1.49430614822) (10286.88627,1.49557668942) (11196.78902,1.49680975457) (12187.17512,1.49800649781) (13265.16354,1.49916803544) (14438.50293,1.50029544721) (15715.62734,1.50138977767) (17105.71684,1.50245203736) (18618.76349,1.50348320403) (20265.64319,1.5044842238) (22058.19383,1.50545601224) (24009.3004,1.50639945542) (26132.98758,1.507315411) (28444.52061,1.50820470915) (30960.51495,1.50906815352) (33699.05576,1.50990652215) (36679.82786,1.51072056837) (39924.25727,1.51151102162) (43455.66519,1.51227858827) (47299.43564,1.51302395242) (51483.19792,1.51374777664) (56037.02523,1.51445070269) (60993.6508,1.51513335227) (66388.70324,1.51579632761) (72260.96257,1.51644021222) (78652.63904,1.51706557144) (85609.67647,1.51767295306) (93182.08258,1.51826288796) (101424.2884,1.51883589057) (110395.5394,1.51939245951) (120160.3218,1.51993307806) (130788.8254,1.52045821466) (142357.4487,1.5209683234) (154949.3478,1.52146384454) (168655.0343,1.52194520486) (183573.0257,1.52241281818) (199810.5535,1.52286708577) (217484.3342,1.5233083967) (236721.4085,1.5237371283) (257660.0536,1.52415364648) (280450.7783,1.52455830615) (305257.4038,1.52495145152) (332258.2421,1.52533341647) (361647.3771,1.52570452487) (393636.0601,1.5260650909) (428454.2282,1.52641541935) (466352.1569,1.52675580592) (507602.2593,1.52708653751) (552501.0442,1.52740789247) (601371.2473,1.5277201409) (654564.1513,1.52802354491) (712462.1108,1.52831835882) (775481.3004,1.52860482945) (844074.7069,1.52888319635) (918735.3846,1.52915369199) (1000000,1.52941654201)};\addlegendentry{explicit path ratio}
\addplot[black,dashed,domain=3:1e6] {1.5385208213635064};\addlegendentry{limit $C$}
\addplot[gray,densely dotted,domain=3:1e6] {1};\addlegendentry{equal success}
\addplot[routeorange,only marks,mark=*,mark size=2.5pt] coordinates {(100,1.3159132463251584)};
\node[anchor=north west,font=\footnotesize] at (axis cs:100,1.3159132463251584){$L=100$};
\end{semilogxaxis}
\end{tikzpicture}
\caption{Numerical evaluation of the explicit ratio from table~\ref{tab:pairs}, with a logarithmic horizontal axis. At $L=100$, the rigorous enclosure is $1.3159<R(100)<1.3160$. Proposition~\ref{prop:family} proves $R(L)>e^{2/15}$ for every real $L\ge1000$ and $R(L)\to C$; the plot does not serve as a proof of monotonicity.}\label{fig:familycurve}
\end{figure}

\section{Exponential separation in connected type}\label{sec:gluing}
\subsection{One polygon with several seven-gon regions}
For $k\ge1$, take a polygon with vertices $v_0,\ldots,v_{5k+1}$. The common diagonals
\begin{equation*}
\delta_j=(v_0,v_{5j+1}),\qquad1\le j\le k-1,
\end{equation*}
divide it into the seven-gons
\begin{equation*}
H_j=(v_0,v_{5j+1},\ldots,v_{5j+6}),\qquad0\le j<k.
\end{equation*}
Insert the local triangulations $\mathcal S$ and $\mathcal U$ from~\eqref{eq:endpoints} in every region, together with all common diagonals. The ambient exchange type is connected $A_{5k-1}$; the common diagonals are mutable, not frozen. Only the face obtained by retaining them is a product of seven-gon flip graphs.

The two-block case is shown in figure~\ref{fig:gluing}; all internal diagonals remain part of the connected exchange graph.
\begin{figure}[tbp]
\centering
\begin{tikzpicture}[font=\scriptsize]
\begin{scope}[xshift=0.0cm]
\coordinate (g0v0) at (90.0:2.3);
\node at (90.0:2.58){$v_{0}$};
\coordinate (g0v1) at (60.0:2.3);
\node at (60.0:2.58){$v_{1}$};
\coordinate (g0v2) at (30.0:2.3);
\node at (30.0:2.58){$v_{2}$};
\coordinate (g0v3) at (0.0:2.3);
\node at (0.0:2.58){$v_{3}$};
\coordinate (g0v4) at (-30.0:2.3);
\node at (-30.0:2.58){$v_{4}$};
\coordinate (g0v5) at (-60.0:2.3);
\node at (-60.0:2.58){$v_{5}$};
\coordinate (g0v6) at (-90.0:2.3);
\node at (-90.0:2.58){$v_{6}$};
\coordinate (g0v7) at (-120.0:2.3);
\node at (-120.0:2.58){$v_{7}$};
\coordinate (g0v8) at (-150.0:2.3);
\node at (-150.0:2.58){$v_{8}$};
\coordinate (g0v9) at (-180.0:2.3);
\node at (-180.0:2.58){$v_{9}$};
\coordinate (g0v10) at (-210.0:2.3);
\node at (-210.0:2.58){$v_{10}$};
\coordinate (g0v11) at (-240.0:2.3);
\node at (-240.0:2.58){$v_{11}$};
\fill[routeblue!6] (g0v0)--(g0v1)--(g0v2)--(g0v3)--(g0v4)--(g0v5)--(g0v6)--cycle;
\fill[routeorange!7] (g0v0)--(g0v6)--(g0v7)--(g0v8)--(g0v9)--(g0v10)--(g0v11)--cycle;
\draw (g0v0)--(g0v1)--(g0v2)--(g0v3)--(g0v4)--(g0v5)--(g0v6)--(g0v7)--(g0v8)--(g0v9)--(g0v10)--(g0v11)--cycle;
\draw[black,very thick,dashed] (g0v0)--(g0v6);
\draw[routeblue,thick] (g0v1)--(g0v4);
\draw[routeblue,thick] (g0v1)--(g0v5);
\draw[routeblue,thick] (g0v1)--(g0v6);
\draw[routeblue,thick] (g0v2)--(g0v4);
\draw[routeblue,thick] (g0v6)--(g0v9);
\draw[routeblue,thick] (g0v6)--(g0v10);
\draw[routeblue,thick] (g0v6)--(g0v11);
\draw[routeblue,thick] (g0v7)--(g0v9);
\node[fill=white,inner sep=2pt] at (0,0){$\delta_1$};
\node[font=\small] at (0,-3){$\mathcal S_2$};
\end{scope}
\begin{scope}[xshift=7.1cm]
\coordinate (g1v0) at (90.0:2.3);
\node at (90.0:2.58){$v_{0}$};
\coordinate (g1v1) at (60.0:2.3);
\node at (60.0:2.58){$v_{1}$};
\coordinate (g1v2) at (30.0:2.3);
\node at (30.0:2.58){$v_{2}$};
\coordinate (g1v3) at (0.0:2.3);
\node at (0.0:2.58){$v_{3}$};
\coordinate (g1v4) at (-30.0:2.3);
\node at (-30.0:2.58){$v_{4}$};
\coordinate (g1v5) at (-60.0:2.3);
\node at (-60.0:2.58){$v_{5}$};
\coordinate (g1v6) at (-90.0:2.3);
\node at (-90.0:2.58){$v_{6}$};
\coordinate (g1v7) at (-120.0:2.3);
\node at (-120.0:2.58){$v_{7}$};
\coordinate (g1v8) at (-150.0:2.3);
\node at (-150.0:2.58){$v_{8}$};
\coordinate (g1v9) at (-180.0:2.3);
\node at (-180.0:2.58){$v_{9}$};
\coordinate (g1v10) at (-210.0:2.3);
\node at (-210.0:2.58){$v_{10}$};
\coordinate (g1v11) at (-240.0:2.3);
\node at (-240.0:2.58){$v_{11}$};
\fill[routeblue!6] (g1v0)--(g1v1)--(g1v2)--(g1v3)--(g1v4)--(g1v5)--(g1v6)--cycle;
\fill[routeorange!7] (g1v0)--(g1v6)--(g1v7)--(g1v8)--(g1v9)--(g1v10)--(g1v11)--cycle;
\draw (g1v0)--(g1v1)--(g1v2)--(g1v3)--(g1v4)--(g1v5)--(g1v6)--(g1v7)--(g1v8)--(g1v9)--(g1v10)--(g1v11)--cycle;
\draw[routeblue,thick] (g1v0)--(g1v2);
\draw[routeblue,thick] (g1v0)--(g1v3);
\draw[black,very thick,dashed] (g1v0)--(g1v6);
\draw[routeblue,thick] (g1v0)--(g1v7);
\draw[routeblue,thick] (g1v0)--(g1v8);
\draw[routeblue,thick] (g1v3)--(g1v5);
\draw[routeblue,thick] (g1v3)--(g1v6);
\draw[routeblue,thick] (g1v8)--(g1v10);
\draw[routeblue,thick] (g1v8)--(g1v11);
\node[fill=white,inner sep=2pt] at (0,0){$\delta_1$};
\node[font=\small] at (0,-3){$\mathcal U_2$};
\end{scope}
\end{tikzpicture}
\caption{Connected gluing with two seven-gon regions in a twelve-gon, of full type $A_9$. The common dashed diagonal $\delta_1=(v_0,v_6)$ separates the two blocks and remains mutable. All unweighted geodesics retain it, giving ten flips; the blockwise competitor uses twelve flips. The shaded regions receive the compatible coordinates in~\eqref{eq:globalcoords}.}\label{fig:gluing}
\end{figure}

\subsection{Compatible global positive data}
Local coefficient assignments must come from one global positive configuration. Define the projective map
\begin{equation*}
\phi_L(x)=\frac{2L}{2L-1}\left(1-\frac1x\right).
\end{equation*}
It sends $0,1,2L$ to $\infty,0,1$. In region $H_j$, assign coordinates
\begin{equation}\label{eq:globalcoords}
\infty,\ j,\ j+\phi_L(2),\ j+\phi_L(3),\ j+\phi_L(L),\ j+\phi_L(L+1),\ j+1.
\end{equation}
Neighbouring regions agree on shared vertices, and all finite coordinates increase strictly. Choose $u_\infty=(0,-1)$ and $u_x=(1,x)$. Every ordered determinant is positive. The ratio $A/B$ is invariant under a common projective transformation and under rescaling each homogeneous vector, because each vertex factor occurs once in numerator and denominator. Hence every regional edge cost agrees exactly with the original seven-gon cost.

\subsection{Geodesics and the success bound}
The type-$A$ non-leaving-face property states that every flip geodesic retains all diagonals common to its endpoints \cite{STT,Ceballos}; see also the uniform Coxeter-theoretic treatment in \cite{Williams}. Thus every geodesic between the constructed endpoints remains in the product face. By Lemma~\ref{lem:unique}, its regional projections are the unique five-flip paths. Every global geodesic consequently has length $5k$ and cost $kW_5(L)$.

Interleaving the regional six-flip paths gives a feasible path with length $6k$ and cost $kW_6(L)$. Proposition~\ref{prop:path} yields
\begin{equation}\label{eq:amplify}
\frac{P_{\opt}(\psi)}{P_{\geo}(\psi)}\ge R(L)^k.
\end{equation}
For a general $n\ge4$, write $n+1=5k+r$ with $0\le r\le4$. Add $r$ vertices beyond the last finite coordinate and retain the additional fan diagonals in both endpoint triangulations. These cut off $r$ common triangles and do not change the original block data. The full polygon has $n+3$ vertices and connected type $A_n$. The same non-leaving-face argument gives~\eqref{eq:amplify}.

Taking $L\ge1000$ and applying Proposition~\ref{prop:family} proves~\eqref{eq:mainbound}. For $1<c<C$, choose one $L$ with $R(L)>c$; that choice works for every $n$. This completes the proof of Theorem~\ref{thm:exponential}.

\begin{Remark}
Weighted-optimal paths may leave the common face. This causes no difficulty: any such improvement only increases the left-hand side of~\eqref{eq:amplify}. We constrain unweighted geodesics using the cited theorem, and provide a feasible weighted competitor. We do not infer a weighted non-leaving-face theorem.
\end{Remark}

Figure~\ref{fig:amplification} shows the quantitative amplification at the fixed admissible parameter $L=1000$.
\begin{figure}[tbp]
\centering
\begin{tikzpicture}
\begin{semilogyaxis}[width=.86\textwidth,height=6.4cm,xlabel={number of seven-gon blocks $k$},ylabel={relative success},xmin=1,xmax=20,xtick={1,5,10,15,20},ymin=1,ymax=5000,legend style={at={(.02,.98)},anchor=north west}]
\addplot[routeblue,very thick,domain=1:20,samples=20,mark=*] {exp(x*0.36586376901779627)};\addlegendentry{feasible ratio $R(1000)^k$}
\addplot[routeorange,dashed,very thick,domain=1:20,samples=20] {exp(2*x/15)};\addlegendentry{proved lower bound $e^{2k/15}$}
\end{semilogyaxis}
\end{tikzpicture}
\caption{Amplification in one connected polygon. At rank $n=5k-1$ (and after adding up to four common ears), the blue curve is the explicit blockwise competitor's success divided by geodesic success. The global optimum is at least this large. The dashed curve is the uniform analytic lower bound. The logarithmic vertical axis displays exponential growth; neither curve is an absolute success probability.}\label{fig:amplification}
\end{figure}

\subsection{Stability under compatible coefficient perturbations}
The strict separation is not confined to a single positive configuration. The derivative bound in~\eqref{eq:fbound} gives an explicit tolerance in logarithmic coefficient coordinates.

\begin{Corollary}[Stable exponential separation]\label{cor:stability}
Fix $L\ge1000$ in the connected construction and write
$\delta(L)=W_5(L)-W_6(L)>2/15$. Perturb the global positive Pl\"ucker data while keeping the endpoint triangulations fixed. Suppose that, on every flip of each of the prescribed regional five- and six-flip paths, the coefficient $Y=A/B$ satisfies
\begin{equation*}
|\log \widetilde Y-\log Y|\le\varepsilon.
\end{equation*}
Then the perturbed sequential probabilities obey
\begin{equation}\label{eq:robust}
\frac{\widetilde P_{\opt}(\psi)}{\widetilde P_{\geo}(\psi)}
\ge \exp\!\left[k\left(\delta(L)-\frac{11\varepsilon}{3}\right)\right].
\end{equation}
In particular, $0\le\varepsilon<2/55$ preserves an exponential separation uniformly in the number of blocks.
\end{Corollary}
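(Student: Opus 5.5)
The plan is to reduce everything to the one-flip Lipschitz estimate in~\eqref{eq:fbound} and then reuse the combinatorial part of the proof of Theorem~\ref{thm:exponential} unchanged. By homogeneity of~\eqref{eq:g}, the cost of a flip with quadrilateral data $(A,B)$ is $\log g(A,B)=f(B/A)=f(Y^{-1})=f(Y)$, using the symmetry $f(r)=f(r^{-1})$. So the cost is a function of $\log Y$ alone. Since $|df/d\log r|\le 1/3$, the mean value theorem gives, for every flip on which the hypothesis holds,
\begin{equation*}
|f(\widetilde Y)-f(Y)|\le \tfrac13|\log\widetilde Y-\log Y|\le \tfrac{\varepsilon}{3}.
\end{equation*}

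First I would check that the perturbed data are still admissible. Perturbed global positive Pl\"ucker data still define compatible coefficients by Lemma~\ref{lem:coeff}, so Proposition~\ref{prop:path} applies verbatim: success ratios are exponentials of differences of path costs, independently of $\psi$.

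Second, I would note that the geodesic side is purely combinatorial. The endpoint triangulations are unchanged, so the non-leaving-face property and Lemma~\ref{lem:unique} still show that every flip geodesic is an interleaving of the $k$ unique regional five-flip paths. The weights depend only on each flip's own quadrilateral. Flips in different regions are independent and preserve each other's quadrilateral data, as noted after~\eqref{eq:g}. Hence the multiset of $(A,B)$ pairs, and so $\widetilde Y$, along any interleaving is exactly that of the prescribed regional paths. The same holds for the common ears added when $r>0$: they are never flipped.

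Therefore every geodesic has perturbed cost at least $k(W_5(L)-5\varepsilon/3)$. The interleaved six-flip competitor has perturbed cost at most $k(W_6(L)+6\varepsilon/3)$. Combining these with Proposition~\ref{prop:path}, and noting that $\widetilde P_{\opt}$ is at least the competitor's success probability, gives
\begin{equation*}
\frac{\widetilde P_{\opt}(\psi)}{\widetilde P_{\geo}(\psi)}\ge \exp\bigl[k(W_5-W_6)-k\cdot\tfrac{11\varepsilon}{3}\bigr],
\end{equation*}
which is~\eqref{eq:robust}. By Proposition~\ref{prop:family}, $\delta(L)>2/15$ for $L\ge1000$. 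Whenever $\varepsilon<2/55$ we have $11\varepsilon/3<2/15<\delta(L)$, so the exponent is a positive multiple of $k$, uniformly in the number of blocks.

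The step needing the most care is the identification of each flip's coefficient along an arbitrary interleaved geodesic with the coefficient on the prescribed regional path. Only then does the hypothesis, stated for the regional paths, control every geodesic. I would argue it directly: the quadrilateral of a flip inside $H_j$ has all four vertices in $H_j$. That quadrilateral is determined by the current triangulation of $H_j$ alone, because the common diagonals $\delta_j$ are retained throughout.
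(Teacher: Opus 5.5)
Your proposal is correct and follows the paper's proof. Both combine the per-flip bound $\varepsilon/3$ (from~\eqref{eq:fbound} and the mean value theorem), the unchanged combinatorics that force every geodesic onto the unique regional five-flip routes, the feasible interleaved six-flip competitor, and blockwise summation with Proposition~\ref{prop:path}; your explicit check that interleaved geodesics see the same quadrilateral data as the prescribed regional paths, so that the hypothesis covers them, is a step the paper leaves implicit and you handle it correctly.
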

\begin{proof}
Since $w=f(Y)$, the mean value theorem and~\eqref{eq:fbound} bound the change of each edge weight by $\varepsilon/3$. The difference between a regional five-edge cost and six-edge cost can therefore decrease by at most $11\varepsilon/3$. The underlying unweighted graph and its geodesics are unchanged. All geodesics still project to the unique five-edge route in every block, whereas the six-edge routes remain feasible. Summing the regional estimates and applying Proposition~\ref{prop:path} proves~\eqref{eq:robust}. Finally, $\delta(L)>2/15$ gives a positive exponent whenever $\varepsilon<2/55$.
\end{proof}
The perturbations must come from global positive Pl\"ucker data; the statement does not permit independent changes that violate coefficient compatibility. For any fixed rank, continuity of the finitely many positive cross-ratios gives a neighbourhood of the constructed configuration satisfying the hypothesis. The bound concerns coefficient uncertainty within the exact model, rather than errors in implemented Kraus operators.

\section{Discussion}
The quantity $W_\gamma$ measures accumulated normalization loss in an exact sequential implementation. Its difference between two paths determines a state-independent success ratio even though the absolute probabilities depend on the input. The exponential theorem is therefore a relative efficiency statement: it does not imply a lower bound independent of rank on the absolute success probability.

The conflict-reduction counterexamples of \cite{ClearyMaio} concern local strategies for the unweighted graph metric. In the present weighted setting, the strict pentagon inequality shows that elementary algebraic simplifications are useful local improvement rules. The seven-gon example shows their limitation: a globally better route can contain more flips. The low-rank certificate isolates where this discrepancy first becomes possible, while projective gluing proves its amplification within a single connected exchange type. Corollary~\ref{cor:stability} shows that a controlled perturbation of the compatible coefficients preserves this amplification.

Several questions remain. One is whether the low-rank reduction lemma admits a short conceptual proof replacing its finite certificate. Another is to determine conditions for weighted geodesics to remain in common faces. Higher cyclic dimensions require new singular-value and pentagon analyses; the three-level formula alone does not establish them. Finally, a physical implementation model that also counts deterministic operations or permits recovery after failure may lead to a different optimization objective.

\appendix
\section{Finite certificates and exact verification}\label{app:certificate}
Lemma~\ref{lem:finite} is checked by two independent constructions of the same finite graph. One generates triangulations recursively and identifies square and pentagonal faces by short cycles; the other enumerates noncrossing diagonal subsets and constructs faces by fixing common diagonals. Both enumerate every simple path and reproduce Table~\ref{tab:finite}. For each nongeodesic path, the verification records a next-step reduction together with a strictly decreasing termination rank.

The second construction proceeds as follows.
\begin{enumerate}[leftmargin=2em]
\item Generate every subset of $m-3$ polygon diagonals. Retain precisely those with no crossing pair. Join states differing in one diagonal.
\item For each fixed set of $m-5$ diagonals, collect states containing it. Check that the resulting face is a cycle with four or five vertices.
\item Enumerate simple paths between each unordered pair of distinct states. Compute shortest distances independently by breadth-first search.
\item Process lengths increasingly. At a fixed length, build components under square replacements that remain simple. For every nongeodesic component, find an exit obtained by a pentagon three-to-two replacement or by a square replacement followed by loop deletion.
\item Check that the exit is shorter and has already been certified. Choose a spanning tree towards the exit and record its distance for each member of the component.
\item Check every recorded move against a geometric face. Verify adjacency, unchanged endpoints, and strict decrease of the lexicographic pair consisting of length and tree distance. Check that every terminal path attains the independently computed endpoint distance.
\end{enumerate}
A square replacement can have equal length; its tree distance must then decrease. At an exit, length strictly decreases. This is the termination argument and prevents a cycle of cost-preserving rewrites from being mistaken for a reduction.

Numerical matrix checks are useful diagnostics but do not replace the projective representation theorem used in Section~\ref{sec:operators}. Likewise, the all-rank separation is proved by the explicit gluing construction, not extrapolated from a finite set of ranks.

\section*{Ethical Statement/Conflict of Interest}
This article complies with all publishing ethics. There is no conflict of interest with any institution or individual.

\section*{Funding}
This work is supported in part by the National Natural Science Foundation of China (No.~12271085) and the Natural Science Foundation of Heilongjiang Province (No. PL2024A007).

\section*{Data availability statement}
No experimental or observational datasets were generated or analysed during the current study. The finite combinatorial certificates and exact-arithmetic verification programs supporting the computer-assisted results are supplied with the submission; the analytic arguments and parameters defining the examples are included in the article.

\pdfbookmark[1]{References}{ref}
\LastPageEnding
\end{document}